\newcommand{\N}{{\mathbb N}}
\newcommand{\R}{{\mathbb R}}
\def\K{\mathcal{K}}
\newtheorem{thm}{Theorem}[section]
\newtheorem{lem}[thm]{Lemma}
\newtheorem{cor}[thm]{Corollary}
\newtheorem{pro}[thm]{Proposition}
\newtheorem{rem}[thm]{Remark}
\numberwithin{equation}{section}
\begin{document}

\title[]{Exact controllability and stability of the Sixth Order Boussinesq equation}
\author{Shenghao Li,  Min Chen, Bing-Yu Zhang}
\date{}
\begin{abstract}
The article studies the exact controllability and the stability of the sixth order Boussinesq equation
 \[    u_{tt}-u_{xx}+\beta u_{xxxx}-u_{xxxxxx}+(u^2)_{xx}=f, \quad \beta=\pm1, \]
 on the interval $S:=[0,2\pi]$ with periodic boundary conditions.

 It is shown that the system is locally exactly controllable in the classic Sobolev space, $H^{s+3}(S)\times H^s(S)$ for $s\geq 0$, for ``small'' initial and terminal states. It is also shown that if $f$ is assigned as an internal linear feedback, the solution of the system is uniformly exponential decay to a constant state in $H^{s+3}(S)\times H^s(S)$ for $s\geq 0$ with ``small" initial data assumption.

\end{abstract}

\keywords{exact controllability; stability; sixth order Boussinesq equation; }

\subjclass[2010]{93B05, 35B37, 93C20, 93D15}

\maketitle


\section{Introduction}
The Boussinesq equation,
\begin{equation}\label{bad}
u_{tt}-u_{xx}+(u^2)_{xx}- u_{xxxx}=0,
\end{equation}
was originally derived by J. Boussinesq (1871) \cite{25} in his study on propagation of small amplitude, long waves on the surface of water. It possesses some special traveling wave solutions called  solitary waves, and it is the first equation that gives a mathematical explanation to the phenomenon of solitary waves which was discovered and reported by Scott Russell in 1830s. The original Boussinesq equation has been used in a considerable range of applications such as coast and harbor engineering, simulation of tides and tsunamis.

However, the original Boussinesq equation (\ref{bad}) has a  drawback, it is  ill-posed for its initial-value problem in the sense that  a slight difference in initial data might evolve into a large change in solutions. This can be seen, for example,
by considering its linear equation as
\begin{equation*}
    u_{tt}-u_{xxxx}:= (\partial_t+\partial_{xx})(\partial_t-\partial_{xx})u=0.
\end{equation*}
The ``$\partial_t-\partial_{xx}$'' can be treated as the heat equation and it is well-posed, but ``$\partial_t+\partial_{xx}$'', the backward heat equation, is ill-posed. One way to correct this ill-posedness issue is to alter the sign of the fourth order derivative term, which leads to  the ``good" Boussinesq equation
 \begin{equation}\label{good}
 u_{tt}-u_{xx}+(u^2)_{xx}+ u_{xxxx}=0.
 \end{equation}
Similarly, its well-posedness can be seen by considering its linearized equation as
\begin{equation*}
    u_{tt}+u_{xxxx}:= (\partial_t+i\partial_{xx})(\partial_t-i\partial_{xx})u=0,
\end{equation*}
a combination of the Schr\"odinger and the reversed Schr\"odinger equations whose initial-value problems are both well-posed. But, due to  the change of the sign to the Boussinesq equation, the ``good" Boussinesq equation cannot be well  justified as a  physical modeling of water waves as the original Boussinesq equation.
To remedy the case,  Christov, Maugin and Velarde \cite{28} modified the original Boussinesq's physical modeling and derived the sixth order Boussinesq equation,
\[u_{tt}-u_{xx}+\beta u_{xxxx}-u_{xxxxxx}+(u^2)_{xx}=0, \quad \beta=\pm1.\]
Its well-posedness can be seen from the linearized  equation:
\begin{equation*}
    u_{tt}-u_{xxxxxx}:=(\partial_t+\partial_{xxx})(\partial_t-\partial_{xxx})u=0,
\end{equation*}
which can be written as a coupled linear KdV equations, where  the KdV and the reversed KdV equations are both  well-posed for their initial value problems.

The sixth order Boussinesq equation was also proposed in modeling the nonlinear lattice dynamics in elastic crystals by Maugin \cite{66}. In addition,
Feng  et al \cite{43} studied the solitary waves as well as their interactions of the sixth order Boussiensq equation. Kamenov \cite{48} obtained an exact periodic solution through the Hirota's bilinear transform method. Moreover, the initial value problem of the sixth order Boussinesq equation and  its initial boundary value problem have been studied in \cite{34,35,125,96,98,79}. However, unlike the well-posedness issues, the control problems of the sixth order Boussinesq equation have not yet been studied.

Since the 1980s, the   control theory of  the nonlinear dispersive wave equations have attracted a lot of attentions due to the development of the mathematical theory on these equations. In particular, the theories on control of the KdV equation were intensively advanced through many people's work \cite{126,127,128,129,131,130,72,73,132}. Other equations such as Kawahara, Boussinesq and nonlinear Schr\"odinger equations were also studied \cite{130,137,134,122,133}. Our goal is to study the control problems of the sixth order Boussinesq equation based on the ideas on the KdV and Boussinesq equations (c.f. \cite{72,73,122}), due to its similarities to those equations (c.f. \cite{125,96,98}).

Our main concern of this article is the  distributed control problem of the sixth order Boussinesq equation with periodic boundary conditions,
      \begin{equation}\label{f2}
        \begin{cases}
        u_{tt}-u_{xx}+\beta u_{xxxx}-u_{xxxxxx}+(u^2)_{xx}=f, \quad x\in S,\\
        \partial_x^k u(2\pi,t)=\partial_x^k u(0,t), \quad k=0,1,...,5.
        \end{cases}
              \end{equation}
The plan is to address the following control problems:
\begin{itemize}
  \item Let $T>0$ be given. Provide the initial condition, $(\varphi_0,\psi_0)$, and the terminal condition, $(\varphi_T,\psi_T)$, in an appropriate space, can one find a control $f$ such that the system \eqref{f2}
        admits a solution $u=u(x,t)$ satisfying
        \[u(x,0)=\varphi_0, u_t(x,0)=\psi_0,\]
        \[u(x,T)=\varphi_T, u_t(x,T)=\psi_T?\]
        \item Can one find a linear feedback control law
        \[f={\mathcal H} u \]
        such that the resulting closed-loop system is exponentially stable?
\end{itemize}

To start our study, certain restrictions on the system are required. We note that, for a smooth solution $u(x,t)$ of the unforced equation (i.e. $f(x,t)=0$ in \eqref{f2}), it can be checked that
\[\frac{d}{dt}\int_S u_{t}(x,t)dx=0,\]
for any $t\in \R$. Therefore,
\[\int_S u_t(x,t)dx=\int_Su_t(x,0)dx\]
and
\begin{equation}\label{conserve}
\int_S u(x,t)dx=\int_S u(x,0)dx+t\int_S u_t(x,0)dx
\end{equation}
for any $t\in \R$. From the original derivation of the equation (see \cite{28}), it describes the behavior of water in a shallow channel. It is then natural to think \eqref{conserve} as the ``conserved volume'' or mass.  Hence, in order to have  $\int_S u(x,t)dx$ conserved, one can choose $u_t(x,0)$ such that $\int_S u_t(x,0)dx=0$. Moreover, with such choice, it follows that the quantity $\int_S u_t(x,t)dx$ is also conserved. Therefore, in order to keep the quantities still conserved on the forced system \eqref{f2} with the control $f(x,t)$, we require that
\begin{equation}\label{f3}
    \int_S f(x,t)dx=0, \quad \forall t\in \R.
\end{equation}

We next impose a priori restriction on the control $f(x,t)$. Let us suppose that $g(x)$ is a smooth function defined for $x\in S$ such that
\[[g]:=\frac{1}{2\pi}\int_S g(x)dx=1,\]
where $[g]$ denotes the mean value of the function $g$ over $S$. We set the control function in the form
\begin{equation}\label{f4}
    f(x,t)=Gh:=g(x)\left(h(x,t)-\int_S g(y)h(y,t)dy\right).
\end{equation}
Then, $h(x,t)$ can be considered as a new control input and it can  be checked the function $f$ defined in \eqref{f4} satisfies the restriction \eqref{f3}. Now, we have the control system written in the form
\begin{equation}\label{con0}
     u_{tt}-u_{xx}+\beta u_{xxxx}-u_{xxxxxx}+(u^2)_{xx}=G h, \quad x\in S,
\end{equation}
with $[u_t(x,0)]=0$.

Before we state the main results of this paper, some preliminaries and notations are needed. Let $H^s(S)$ for $s\geq 0$ be the space of all functions in the form
\[v(x)=\sum^{\infty}_{-\infty} v_k e^{ikx}\]
such that
\begin{equation}\label{sls}
\left\{\sum^{\infty}_{-\infty} |v_k|^2(1+|k|)^{2s} \right\}^{\frac12}<+\infty.
\end{equation}
The left side of \eqref{sls} is a Hilbert norm for $H^s(S)$ and we  denote it as $\|v\|_s$.  In addition, for any $s\geq 0$, we set
    \[X^s:=H^{s+3}(S)\times H^s(S)\]
    and define its norm
    \[\|\vec{u}\|_{X^s}:=\left(\|u\|_{s+3}^2+\|v\|_s^2\right)^{\frac12}\]
    for any $\vec{u}=\begin{bmatrix}
   u \\
    v\end{bmatrix}\in X^s$.

The following theorem is the main result on the exact controllability problem of \eqref{con0}.

\begin{thm}\label{c2}
 Let $T>0$ and $s\geq 0$ be given. There exists a $\delta>0$ such that for any  $(\varphi_0,\psi_0)\times(\varphi_T,\psi_T)\in X^s\times X^s$  with $[\varphi_0]=[\varphi_T]$ and $[\psi_0]=[\psi_T]=0$ satisfying
 \begin{equation}\label{g01}
 \|\varphi_0\|_{s+3}+\|\psi_0\|_{s}\leq \delta, \quad \|\varphi_T\|_{s+3}+\|\psi_T\|_{s}\leq \delta,
 \end{equation}
 then one can find a control function $h\in L^2(0,T;H^s(S))$  such that the equation
\[u_{tt}-u_{xx}+\beta u_{xxxx}-u_{xxxxxx}+ (u^2)_{xx}= Gh,\quad x\in S,\]
has a solution $u\in C(0,T;H^{s+3}(S))\times C^1(0,T;H^s(S))$ with
\[u(x,0)=\varphi_0(x), \quad u_t(x,0)=\psi_0(x),\]
\[u(x,T)=\varphi_T(x), \quad u_t(x,T)=\psi_T(x).\]
 \end{thm}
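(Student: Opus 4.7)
The plan is to follow the two-step strategy for controllability of semilinear dispersive equations that has proved effective for the KdV and Boussinesq systems (cf.~\cite{72,73,122}): first establish exact controllability of the linearization, then extract the nonlinear statement by a contraction argument on a small ball.

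For the linear step, I would study
\[u_{tt}-u_{xx}+\beta u_{xxxx}-u_{xxxxxx}=Gh\]
via Fourier expansion $u(x,t)=\sum_{k} u_k(t)e^{ikx}$, reducing it to the decoupled family
\[\ddot u_k+\omega_k^2 u_k=(Gh)_k,\qquad \omega_k^2=k^6+\beta k^4+k^2=k^2(k^4+\beta k^2+1).\]
Since the quartic $\xi^2+\beta\xi+1$ is strictly positive for $\beta=\pm 1$, one has $\omega_k^2>0$ for every $k\neq 0$; the zero mode is compatible by virtue of the structure of $G$ together with the hypotheses $[\psi_0]=[\psi_T]=0$ and $[\varphi_0]=[\varphi_T]$. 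For large $|k|$, the expansion $\omega_k=|k|^3+O(|k|)$ shows that the spectral gaps $|\omega_{k+1}-\omega_k|$ tend to infinity, leaving only finitely many small-$k$ gaps to check by hand. An Ingham-type inequality then yields the observability estimate
\[\|(\phi(\cdot,0),\phi_t(\cdot,0))\|_{X^s}^2\le C_T\int_0^T \|G^*\phi(\cdot,t)\|_s^2\,dt\]
for solutions $\phi$ of the homogeneous equation, valid for any $T>0$. Dualizing by the Hilbert Uniqueness Method produces a bounded linear control operator $\Phi_T:X^s\times X^s\to L^2(0,T;H^s(S))$, defined on pairs of mean-compatible end states, that steers the linear system from the first to the second.

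For the nonlinear step, let $Y^s_T:=C([0,T];H^{s+3}(S))\cap C^1([0,T];H^s(S))$ and let $W[F,\vec u_0]$ denote the solution of the inhomogeneous linear equation with forcing $F$ and initial datum $\vec u_0$. Given $w\in Y^s_T$, set $W_w:=W[-(w^2)_{xx},0]$ and define
\[h_w:=\Phi_T\!\left((\varphi_0,\psi_0),\;(\varphi_T,\psi_T)-\bigl(W_w(\cdot,T),\partial_t W_w(\cdot,T)\bigr)\right),\]
\[\Gamma(w):=W\!\left[Gh_w-(w^2)_{xx},\,(\varphi_0,\psi_0)\right].\]
By construction, a fixed point of $\Gamma$ solves the controllability problem. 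The crucial nonlinear bound is the bilinear estimate
\[\|(w_1^2-w_2^2)_{xx}\|_{L^1(0,T;H^s(S))}\le C_T\bigl(\|w_1\|_{Y^s_T}+\|w_2\|_{Y^s_T}\bigr)\|w_1-w_2\|_{Y^s_T},\]
which follows from the Banach-algebra property of $H^{s+3}(S)$ for $s\ge 0$ together with the fact that the two-derivative loss on the nonlinearity is absorbed by the three-derivative gap between $u$ and $u_t$ in the norm on $X^s$. Combined with the smallness assumption \eqref{g01} and the boundedness of $\Phi_T$, this makes $\Gamma$ a strict contraction on a sufficiently small closed ball of $Y^s_T$.

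The decisive step is the observability inequality. Although the asymptotic spectral gap is comfortably large, the Ingham inequality with weights $(1+|k|)^{2s}$ must be established with a constant $C_T$ valid for arbitrary $T>0$ and for all $s\ge 0$, which requires a careful treatment of the finitely many low modes and a Paley--Wiener argument at the top end. Everything else---the energy estimate for the inhomogeneous linear evolution, the algebra estimate on $(w^2)_{xx}$, and the contraction mapping---is routine once the linear control operator $\Phi_T$ has been constructed with the correct mapping properties.
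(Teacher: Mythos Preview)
Your two-step plan (linear controllability via spectral analysis, then contraction for the semilinear problem) matches the paper's strategy, and the nonlinear fixed-point step is essentially identical to what the paper does in Section~2.4: one sets $h=K_T(\vec u_0,\vec u_T-\mu(T,\vec u))$ with $\mu(T,\vec u)=\int_0^T W(T-\tau)F(\vec u)(\tau)\,d\tau$ and closes a contraction on a small ball using the algebra property of $H^{s+3}(S)$.

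The linear step is where the routes diverge. You propose an observability inequality plus HUM, whereas the paper uses the moment method: it works with the Riesz basis $\{e^{\lambda_n t}\}$ in $L^2(0,T)$ (citing Ingham), takes its dual biorthogonal family $\{q_n\}$, and writes the control explicitly as $h=\sum_{n\neq 0} q_n\bigl[c_{1,n}G((\phi_{1,n})_2)+c_{2,n}G((\phi_{2,n})_2)\bigr]$, solving a $2\times 2$ linear system for $(c_{1,n},c_{2,n})$ by Cramer's rule. Both approaches ultimately rest on the same Ingham-type gap estimate and lead to the same bounded control operator, so neither is more general here; the moment method gives an explicit formula for $h$ and makes the $L^2(0,T;H^s)$ bound a direct series computation, while HUM packages the same content as a duality statement.

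One point you should address explicitly: each nonzero eigenvalue $\lambda_n$ of $A$ has geometric multiplicity two (the spatial modes $e^{inx}$ and $e^{-inx}$ share the frequency $\omega_n$). Your scalar reduction $\ddot u_k+\omega_k^2 u_k=(Gh)_k$ hides this. In the paper this is exactly the content of the nonvanishing of the $2\times 2$ Gram determinant $\Delta_n$ (the linear independence of $G((\phi_{1,n})_2)$ and $G((\phi_{2,n})_2)$); in your HUM framework it is the statement that $G^*$ does not annihilate any nontrivial combination of the two eigenvectors at a given $\lambda_n$. Without this check the observability inequality can fail even though the exponentials $\{e^{\lambda_n t}\}$ form a Riesz sequence, so you should make it explicit.
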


The other main result in this paper is the stability of the system \eqref{con0}. We will show that by given the following feed back law
\[h=-\K u_t(x,t),\]
for some $\K>0$, the controlled solution $u(x,t)$ of the following closed-loop system
\begin{equation}\label{closel0}
    \begin{cases}
    u_{tt}-u_{xx}+\beta u_{xxxx}-u_{xxxxxxx}+(u^2)_{xx}=-\K Gu_t,  \ \ \ x\in S,\\
    u(x,)=\varphi_0(x), u_t(x,0)=\psi_0(x),\\
    \end{cases}
\end{equation}
should tend to the constant state $\tilde{u}(x):=[\varphi_0]$ as $t\rightarrow \infty$ if $[\psi_0]=0$.
\begin{thm}\label{stable1}
For $s\geq 0$, there exists some positive constants $\delta$, $M$, and $\sigma$ such that every solution of the system \begin{equation}\label{stables}
    \begin{cases}
    u_{tt}-u_{xx}+\beta u_{xxxx}-u_{xxxxxxx}+(u^2)_{xx}=-\K Gu_t,  \ \ \ x\in S,\\
    u(x,)=\varphi_0(x), u_t(x,0)=\psi_0(x),\\
    \end{cases}
\end{equation}
where $(\varphi_0,\psi_0) \in X^s$ with  $[\psi_0]=0$ and $\|(\varphi_0,\psi_0)\|_{X^s}\leq \delta$ satisfies
\begin{equation}\label{nlest}
     \|(u(\cdot,t), u_t(\cdot,t))-([\varphi_0],0)\|_{X^s}\leq M e^{-\sigma t}\|(\varphi_0,\psi_0)-([\varphi_0] ,0)\|_{X^s}.
\end{equation}
\end{thm}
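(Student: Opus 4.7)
The plan is the classical \emph{linear observability plus nonlinear perturbation} scheme: reduce to a zero-mean problem around the target state, obtain exponential decay for the closed-loop linear semigroup from an energy identity and an observability inequality, and then close the estimate for the full equation by small-data perturbation.

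Using the conservation identities derived before Theorem~\ref{c2}, the hypothesis $[\psi_0]=0$ forces $[u(\cdot,t)]\equiv[\varphi_0]$ and $[u_t(\cdot,t)]\equiv 0$ for every $t\ge 0$. Setting $w:=u-[\varphi_0]$, the pair $(w,w_t)$ lies in the zero-mean subspace of $X^s$ and satisfies
\begin{equation*}
w_{tt}-w_{xx}+\beta w_{xxxx}-w_{xxxxxx}+2[\varphi_0]\,w_{xx}+(w^2)_{xx}=-\K G w_t,
\end{equation*}
with initial data $(\varphi_0-[\varphi_0],\psi_0)$, so \eqref{nlest} is equivalent to exponential decay of $\|(w,w_t)\|_{X^s}$ to zero.

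For the linear closed-loop system $w_{tt}-w_{xx}+\beta w_{xxxx}-w_{xxxxxx}=-\K G w_t$, multiplying by $w_t$ and integrating by parts gives the energy identity
\begin{equation*}
\frac{d}{dt}E(t)=-\K\int_S(Gw_t)(x,t)\,w_t(x,t)\,dx,\qquad E(t):=\tfrac12\int_S\bigl(w_t^2+w_x^2+\beta w_{xx}^2+w_{xxx}^2\bigr)dx.
\end{equation*}
A Fourier computation on zero-mean modes shows that $k^2+\beta k^4+k^6$ is comparable to $(1+|k|)^6$ for $k\ne 0$, so $E$ is equivalent to $\|(w,w_t)\|_{X^0}^2$ even when $\beta=-1$, and $\int_S (Gw_t)\,w_t\,dx\ge 0$. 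The main ingredient is then the observability inequality
\begin{equation*}
E(0)\le C\int_0^T\!\!\int_S (Gw_t)(x,t)\,w_t(x,t)\,dx\,dt,
\end{equation*}
which I would establish by a compactness--uniqueness argument: if it failed, a normalized sequence of linear solutions would produce a weak limit $w_\ast$ with $Gw_{\ast,t}\equiv 0$, and the nonvanishing of $g$ on an open subset of $S$ combined with a Fourier unique-continuation argument for the sixth-order symbol would force $w_\ast\equiv 0$, contradicting normalization. Combined with the energy identity this yields $E(T)\le\rho E(0)$ with some $\rho<1$, hence exponential decay of the linear semigroup $S(t)$ on the zero-mean subspace of $X^0$. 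Decay in $X^s$ with $s>0$ follows by a standard regularity bootstrap, since the principal operator commutes with $\partial_x$ and the commutator with $G$ is of lower order.

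For the nonlinear closure, I would treat the full equation through its Duhamel formula, absorbing $2[\varphi_0]w_{xx}$ as a small perturbation of the generator (admissible since $|[\varphi_0]|\le\delta$) and bounding $\|(w^2)_{xx}\|_{L^2(0,T;H^s)}\le C\|(w,w_t)\|_{L^\infty(0,T;X^s)}^2$ by the same bilinear estimates that underlie Theorem~\ref{c2}. A contraction on the exponentially weighted norm $\sup_{t\ge 0}e^{\sigma t}\|(w,w_t)(t)\|_{X^s}$ then delivers \eqref{nlest} for $\delta$ sufficiently small. The hardest step is the observability inequality: the sixth-order symbol yields two distinct temporal frequencies $\pm\sqrt{k^2+\beta k^4+k^6}$ per spatial mode, and the unique continuation must be extracted from the multiplied observation $gw_t$ on $\mathrm{supp}(g)$ rather than from $w_t$ itself; keeping the observability constant uniform in $s$ when bootstrapping to higher regularity is the other delicate point.
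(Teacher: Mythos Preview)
Your overall architecture is sound and would lead to the result, but it diverges from the paper's proof in two substantive ways, and one of your steps is left genuinely incomplete.

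\textbf{Observability.} The paper does \emph{not} prove the observability inequality by compactness--uniqueness. Instead it recycles the linear exact controllability already established (Theorem~\ref{c0}): given the closed-loop linear solution $u$ on $[0,T]$, it invokes $K_T$ to build an auxiliary solution $v$ of the \emph{open-loop} linear equation with $v(0)=v_t(0)=0$ and $(v,v_t)(T)=(u,u_t)(T)$, then computes $E(T)$ as a cross-energy between $u$ and $v$ to obtain $E(T)\le \|Gu_t\|_{L^2_{t,x}}\|f-\K v_t\|_{L^2_{t,x}}$. Combined with $\|f\|+\|v_t\|\lesssim E(T)^{1/2}$ this gives the observability bound directly, with an explicit constant depending only on $C_T$ and $\sup g$. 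This is shorter than your route and, crucially, avoids the compactness step altogether. In your scheme that step is the real gap: from $\int_0^T\langle Gw_{n,t},w_{n,t}\rangle\to 0$ you get $Gw_{n,t}\to 0$ strongly and hence $Gw_{*,t}=0$, but the contradiction requires $E_*(0)=1$, which needs \emph{strong} convergence of the initial data, not just weak. You would have to supply a high/low frequency splitting (high modes controlled by a quantitative Ingham-type bound, low modes by finite dimensionality) or a propagation-of-compactness argument; neither is automatic here, and you have not indicated how you would do it.

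\textbf{Higher regularity.} The paper does not commute with $\partial_x$; it differentiates in \emph{time}, observing that $v=u_t$ and $w=u_{tt}$ again solve the linear closed-loop system, applies the $s=0$ decay to them, and recovers $\|u\|_{H^{s+6}}$ from the equation $u_{xxxxxx}-\beta u_{xxxx}+u_{xx}=w+\K Gv$. Your $\partial_x$-commutation would also work since $G$ has a lower-order commutator with $\partial_x$, so this is a genuine alternative rather than an error.

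\textbf{Nonlinear closure.} The paper argues discretely: it fixes $T$ with $Ce^{-\gamma T}\le\tfrac14$, runs a contraction on $C([0,T];X^s)$ for the Duhamel map built from $W_\K$, and arranges $\|\vec u(T)\|_{X^s}\le\tfrac12\|\vec u_0\|_{X^s}$, then iterates. Your exponentially weighted contraction on $[0,\infty)$ is an equivalent standard device; either works once the linear decay is in hand. Your treatment of the extra term $2[\varphi_0]w_{xx}$ as a small perturbation matches the paper's remark following the proof.

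In short: your plan is viable, but the paper's controllability-to-observability shortcut is both cleaner and complete, whereas your compactness--uniqueness argument, as written, is missing the step that upgrades weak to strong convergence.
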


The article is organized as follows: Section 2 is devoted to consider the controllability problems. In subsection 2.1, we first address the well-posedness issue for forced system. In subsection 2.2, we conduct a spectral analysis of the operator
\[A=\begin{bmatrix}
   0&1 \\
    \partial_x^2+\beta\partial_x^4+\partial_x^6 &0\end{bmatrix}\]
    defined in the space $H^{s+6}(S)\times H^s(S)$ for $s\geq 0$. We show it is a discrete spectral operator and its eigenvectors form a Riesz basis of the space $X^s$. This allows us to proceed the techniques adapted in \cite{72,73,122} for KdV and Boussinesq equations. The proof of our main results Theorem \ref{c2} will be given in subsection 2.3. Section 3 considers the stability problem. In subsection 3.1, we show the exponential decay for the solution of the linear problem. In subsection 3.2, we move on to the nonlinear problem and prove the Theorem \ref{stable1}.
\section{Exact Controllability problem}

\subsection{well-posedness}
 We first establish the well-posedness of the initial value problem (IVP) of the forced sixth order Boussinesq equation on a periodic domain $S$,
\begin{equation}\label{f01}
    \begin{cases}
    &u_{tt}-u_{xx}+\beta u_{xxxx}-u_{xxxxxx}+(u^2)_{xx}=f, \quad x\in S, \\
    &u(x,0)=\varphi_0(x), u_{x}(x,0)=\psi_0(x).
    \end{cases}
\end{equation}

We rewrite the IVP \eqref{f01} into the following first order evolution problem,
\begin{equation}\label{w2}
    \frac{d}{dt}\vec{u}=A \vec{u}+F(\vec{u})+\vec{g}, \quad \vec{u}(0)=\vec{u}_0,
\end{equation}
where
\[\vec{u}=\begin{bmatrix}
   u \\
    u_t\end{bmatrix},\quad A=\begin{bmatrix}
   0 & 1 \\
    \partial_{x}^2-\beta \partial_{x}^4+\partial_{x}^6& 0\end{bmatrix},\quad F(\vec{u})=\begin{bmatrix}
   0 \\
    -(u^2)_{xx},\end{bmatrix}\]
    and
    \[\vec{g}=\begin{bmatrix}
   0 \\
    f\end{bmatrix}, \quad \vec{u}_0=\begin{bmatrix}
   \varphi_0 \\
    \psi_0\end{bmatrix}.\]
    It can be checked that the operator $A$ is a linear operator from $X^s$ to $X^s$ with $D(A)=H^{s+6}(S)\times H^s(S)$ and it generates an isomorphic group $W(t)$ on the space $X^s$ for any $s\geq 0$. The following  proposition comes from  the standard semigroup theory (c.f. \cite{123}).

    \begin{pro}\label{p1}
    Let $s\geq0$ and $T>0$ be given. There exists a constant $C>0$ such that
    \begin{equation}\label{w3}
        \sup_{t\in[0,T]}\|W(t)\vec{u}\|_{X^s}\leq C \|\vec{u}\|_{X^s}
    \end{equation}
    for any $\vec{u}\in X^s$ and
    \begin{equation}\label{w4}
        \sup_{t\in[0,T]} \left\|\int^t_0 W(t-\tau)\vec{f}d\tau\right\|_{X^s}\leq C\|\vec{f}\|_{L^1(0,T;X^s)}
    \end{equation}
    for any $\vec{f}\in L^1(0,T;X^s)$.
    \end{pro}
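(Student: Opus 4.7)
The proposition is the standard assertion that a $C_0$-group on a Banach space is uniformly bounded on any compact time interval, together with the matching Duhamel estimate. My plan is to verify that $A$ generates a $C_0$-group on $X^s$ by mode-by-mode Fourier analysis and then read off both bounds. Expanding $\vec u=(u,v)\in X^s$ in Fourier series, $A$ restricted to the $k$-th mode acts as the $2\times 2$ matrix
\[ A_k=\begin{pmatrix} 0 & 1 \\ -\mu_k & 0 \end{pmatrix},\qquad \mu_k:=k^2+\beta k^4+k^6. \]
A short check shows $\mu_k>0$ for every $k\neq 0$ and either sign of $\beta$: in the case $\beta=-1$ one rewrites $\mu_k=k^2\bigl((k^2-\tfrac12)^2+\tfrac34\bigr)$. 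Setting $\omega_k:=\sqrt{\mu_k}$, one has $\omega_k\sim|k|^3$ as $|k|\to\infty$, and the eigenvalues of $A_k$ are $\pm i\omega_k$; for $k=0$, $A_0$ is nilpotent with $e^{tA_0}=I+tA_0$.

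Next, introduce on $X^s$ the \emph{energy} inner product
\[ \langle \vec u,\vec w\rangle_{E,s}:=\hat u_0\overline{\hat w_0}+\hat v_0\overline{\hat w_0}+\sum_{k\neq 0}\bigl(\omega_k^2\,\hat u_k\overline{\hat w_k}+\hat v_k\overline{\hat w_k}\bigr)(1+|k|)^{2s}. \]
Because $\omega_k^2(1+|k|)^{2s}$ is comparable to $(1+|k|)^{2(s+3)}$ for $|k|\geq 1$, the induced norm is equivalent to $\|\cdot\|_{X^s}$. A direct computation shows that, under $\langle\cdot,\cdot\rangle_{E,s}$, every nonzero-mode block $A_k$ is skew-symmetric, so $e^{tA_k}$ is an $E$-isometry, while the zero block contributes at most $1+|t|$. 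Summing over $k$ produces a $C_0$-group $W(t)$ on $X^s$ satisfying the bound $\sup_{t\in[0,T]}\|W(t)\vec u\|_{X^s}\leq C(T)\|\vec u\|_{X^s}$, which is \eqref{w3}.

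The estimate \eqref{w4} then follows by applying Minkowski's integral inequality inside Duhamel's formula: for any $\vec f\in L^1(0,T;X^s)$ and any $t\in[0,T]$,
\[ \Bigl\|\int_0^t W(t-\tau)\vec f(\tau)\,d\tau\Bigr\|_{X^s}\leq \int_0^t \|W(t-\tau)\vec f(\tau)\|_{X^s}\,d\tau\leq C(T)\|\vec f\|_{L^1(0,T;X^s)}. \]
The only delicate point is identifying the correct weight $\omega_k^2$ in the energy inner product so that each mode block becomes skew-symmetric; once the equivalence $\omega_k\sim|k|^3$ transfers this energy norm to the Sobolev scale, both bounds follow essentially for free, which is why the paper simply cites standard semigroup theory.
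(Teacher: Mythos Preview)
Your argument is correct. The paper does not actually prove this proposition; it simply states that it ``comes from the standard semigroup theory'' and cites Pazy's monograph. What you have written is precisely the explicit computation that underlies that citation: diagonalize $A$ mode by mode, observe that for $k\neq 0$ the block $A_k$ is skew-adjoint with respect to the weighted inner product $\omega_k^2\,a\bar c+b\bar d$, so $e^{tA_k}$ is unitary there, while the nilpotent $k=0$ block contributes only polynomial growth $1+|t|$. The equivalence $\omega_k^2\sim |k|^6$ transfers the energy norm back to the $X^s$ scale, and Minkowski's inequality in Duhamel gives \eqref{w4}. This is exactly the content the paper is invoking when it appeals to semigroup theory, and it is also consistent with the spectral analysis carried out later in Section~2.2, where the same eigenvalues $\pm i\omega_k$ appear as $\lambda_{1,k},\lambda_{2,k}$.

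One small cosmetic point: in your definition of $\langle\cdot,\cdot\rangle_{E,s}$ the zero-mode contribution is mistyped; both terms are paired with $\overline{\hat w_0}$, whereas you presumably intend the first components paired together and the second components paired together. This does not affect the argument.
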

    We can now state the theorem of the well-posedness for IVP \eqref{w2}.
    \begin{thm}\label{f02}
Let $s\geq 0$ and $T>0$ be given. For any $\vec{u}_0\in X^s$ and $\vec{g}\in L^1(0,T;X^s)$, there exists a $T^*>0$, depending only on $\|\vec{u}_0\|_{X^s}$ and $\|\vec{g}\|_{L^1(0,T;X^s)}$, such that the IVP \eqref{w2} has a unique solution $\vec{u}\in C(0,T^*;X^s)$.
\end{thm}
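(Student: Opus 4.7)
The plan is to cast the IVP \eqref{w2} in Duhamel form and run a standard contraction argument in $C(0,T^\ast;X^s)$. By Proposition \ref{p1}, the group $W(t)$ satisfies the linear estimates \eqref{w3}--\eqref{w4}, so it is natural to define the map
\[
\Gamma[\vec{u}](t):=W(t)\vec{u}_0+\int_0^t W(t-\tau)\bigl(F(\vec{u}(\tau))+\vec{g}(\tau)\bigr)\,d\tau
\]
and seek a fixed point of $\Gamma$ in a suitable closed ball of $C(0,T^\ast;X^s)$.

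The key nonlinear ingredient is an estimate on $F(\vec{u})$. Since $s\ge 0$ implies $s+3>1/2$, the Sobolev space $H^{s+3}(S)$ is a Banach algebra, so
\[
\|F(\vec u)\|_{X^s}=\|(u^2)_{xx}\|_{s}\le \|u^2\|_{s+2}\le C\|u^2\|_{s+3}\le C\|u\|_{s+3}^{2}\le C\|\vec u\|_{X^s}^{2}.
\]
For the Lipschitz bound, I would write $u^2-v^2=(u+v)(u-v)$ and use the same algebra property to get
\[
\|F(\vec u)-F(\vec v)\|_{X^s}\le C\bigl(\|\vec u\|_{X^s}+\|\vec v\|_{X^s}\bigr)\|\vec u-\vec v\|_{X^s}.
\]

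Using \eqref{w3}--\eqref{w4}, for $\vec u\in C(0,T^\ast;X^s)$ one obtains
\[
\|\Gamma[\vec u]\|_{C(0,T^\ast;X^s)}\le C\|\vec u_0\|_{X^s}+CT^\ast\|\vec u\|_{C(0,T^\ast;X^s)}^{2}+C\|\vec g\|_{L^1(0,T;X^s)},
\]
and a similar bound for $\|\Gamma[\vec u]-\Gamma[\vec v]\|_{C(0,T^\ast;X^s)}$ with a factor proportional to $T^\ast(\|\vec u\|+\|\vec v\|)$. Setting $R:=2C(\|\vec u_0\|_{X^s}+\|\vec g\|_{L^1(0,T;X^s)})$ and choosing $T^\ast\le T$ small enough (depending only on $R$, hence only on $\|\vec u_0\|_{X^s}$ and $\|\vec g\|_{L^1(0,T;X^s)}$) so that $CT^\ast R\le \tfrac{1}{4}$, one checks that $\Gamma$ maps the closed ball of radius $R$ into itself and is a strict contraction. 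The Banach fixed-point theorem then produces the unique solution $\vec u\in C(0,T^\ast;X^s)$.

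The only genuine work is the product estimate on $(u^2)_{xx}$; everything else is a routine Duhamel contraction. The reason this step is not really an obstacle is that the regularity index $s+3$ for the first component of $X^s$ is already above $1/2$, so the algebra property of $H^{s+3}(S)$ supplies all needed bilinear and Lipschitz estimates without any smoothing effect from the linear group. Uniqueness follows from the contraction property restricted to any common interval.
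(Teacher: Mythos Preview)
Your proposal is correct and follows precisely the approach the paper describes: the paper states that the proof ``is established on a standard contraction mapping process based on Proposition~\ref{p1}'' and omits the details, and you have supplied exactly that argument, with the algebra property of $H^{s+3}(S)$ furnishing the required bilinear and Lipschitz bounds on $F(\vec u)$.
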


    The proof of Theorem \ref{f02} is established on a standard contraction mapping process based on Proposition \ref{p1}, which is very much similar to the work in \cite{73}, therefore omitted.

\subsection{Spectral analysis}
In this subsection, we analysis the spectral of the operator $A$ defined in \eqref{w2}. Recall that
\[
A=\begin{bmatrix}
   0&1 \\
    \partial_x^2-\beta\partial_x^4+\partial_x^6 &0\end{bmatrix}.\] We define
\begin{equation}\label{sp1}
E_{1,0}=\begin{bmatrix}
   1 \\
    0\end{bmatrix},\quad
 E_{2,0}=\begin{bmatrix}
   0 \\
    1\end{bmatrix},
\end{equation}
and
\begin{equation}\label{sp2}
E_{1,k}=\frac{1} {k^{3}}\begin{bmatrix}
   e^{ikx} \\
    0\end{bmatrix},\quad
 E_{2,k}=\begin{bmatrix}
   0 \\
    e^{ikx}\end{bmatrix},
\end{equation}
for $k=\pm 1, \pm 2,...$. Direct computation leads to the following conclusions.
 \begin{itemize}
   \item For $k=\pm1, \pm2,...$, we notice that
\[A(E_{1,k},E_{2,k})=(E_{1,k},E_{2,k})M_k\]
with
\[M_k=\begin{bmatrix}
   0 & k^3 \\
    -\frac1k(k^4+\beta k^2+1) & 0\end{bmatrix}.\]
    The matrix $M_k$ has eigenvalues
\[\lambda_{1,k}=i\sqrt{k^2(k^4+\beta k^2+1)},\quad \lambda_{2,k}=-i\sqrt{k^2(k^4+\beta k^2+1)},\]
with corresponding eigenvectors
\begin{equation}\label{sp3}
\vec{e}_{1,k}=\begin{bmatrix}
   1\\
    \frac{\lambda_{1,k}}{k^3}\end{bmatrix},\quad \vec{e}_{2,k}=\begin{bmatrix}
   1\\
    \frac{\lambda_{2,k}}{k^3}\end{bmatrix}.
\end{equation}
    \item For  $k=0$, we notice that
\[A(E_{1,k},E_{2,k})=(E_{1,k},E_{2,k})M_k,\quad \mbox{with}\quad M_k=\begin{bmatrix}
   0 & 1 \\
    0 & 0\end{bmatrix}.\]
The  matrix $M_k$ has eigenvalues  $\lambda_{1,k}=\lambda_{2,k}=0$ with corresponding eigenvectors
\begin{equation}\label{sp4}
\vec{e}_{1,k}=\vec{e}_{2,k}=\begin{bmatrix}
   1 \\
    0\end{bmatrix}
.
\end{equation}
 \end{itemize}
Therefore, these yield:
\begin{itemize}
  \item For $k=\pm1, \pm2,...$,
\begin{align*}
A(E_{1,k},E_{2,k})(&\vec{e}_{1,k},\vec{e}_{2,k})=(E_{1,k},E_{2,k})M_k (\vec{e}_{1,k},\vec{e}_{2,k})\\
&=(\lambda_{1,k}(E_{1,k},E_{2,k})\vec{e}_{1,k},\lambda_{2,k}(E_{1,k},E_{2,k})\vec{e}_{2,k}).
\end{align*}
One can deduce that the operator $A$ has eigenvalues $\lambda_{1,k}$ and $\lambda_{2,k}$ with corresponding eigenvectors
\begin{equation}\label{v}
    \vec{\eta}_{1,k}=(E_{1,k},E_{2,k})\vec{e}_{1,k},\quad \vec{\eta}_{2,k}=(E_{1,k},E_{2,k})\vec{e}_{2,k}.
\end{equation}
  \item For $k=0$, we can also show  that the operator $A$  has an eigenvalue $\lambda_k=0$ with the corresponding eigenvector
\[\vec{\eta}_k=\begin{bmatrix}
   1 \\
    0\end{bmatrix}.\]
\end{itemize}
    In addition,
    \[L_k=(\vec{e}_{1,k},\vec{e}_{2,k})\rightarrow
\begin{bmatrix}
   1&1 \\
    i&-i\end{bmatrix}
\]
as $k\rightarrow +\infty$, which follows
\[\lim_{k\rightarrow \infty }\mbox{det} L_k =-2i\neq 0,\]
thus $\{\vec{e}_{1,k},\vec{e}_{2,k}\}$, for $k=\pm 1,\pm 2,...$, are linearly independent.
        Moreover, since $\{E_{1,k},E_{2,k}\}$, $k=0, \pm 1,\pm 2,...$ form an orthogonal basis for the space $X^s$, according to \cite{119}, we have that $\{\vec{\eta}_0, \vec{\eta}_{1,k},\vec{\eta}_{2,k},  k=\pm1,\pm2,...\}$ forms a Riesz basis for the space $X^s$. 

        We denote
        \[m_{j,k}:=\|\vec{\eta}_{j,k}\|_{X^s},\]
        and
        \begin{equation}\label{sp5}
        \hat{\phi}_{j,k}:=\vec{\eta}_{j,k}/m_{j,k},
    \end{equation}
        for $j=1,2$ and $k=\pm1, \pm2,...$. It can be verified that,
        \[\{\hat{\phi}_0=\vec{\eta}_0, \quad \hat{\phi}_{j,k}, \quad \mbox{for } j=1,2 \mbox{ and } k=\pm1,\pm2,...\}\]
        forms an orthonormal basis for the space $X^s$, that is,
        \[\langle\hat{\phi}_{j,k},\hat{\phi}_{l,m}\rangle=\begin{cases}
        &1, \quad \mbox{if }j=l, k=m,\\
        &0, \quad \mbox{otherwise}.
        \end{cases}\]
  \begin{rem}
  To obtain the above orthogonality between vectors, we need to adapt an equivalence definition of the norm in $H^s(S)$, that is, for
  \[v(x)=\sum^\infty_{k=-\infty} v_k e^{ikx},\]
  we have the following equivalence relation,
  \[\|v\|_{s}\approx \sum^{\infty}_{-\infty}(k^2(1+\beta k^2+k^4))^{s/3}|v_k|^2.\]
  \end{rem}
  Now, we state the following theorem based to above analysis.
  \begin{thm}\label{sa}
  Define
  \[Q:=L^2(S)\times L^2(S).\]
  Let
  \begin{equation}\label{eva}
  \lambda_n=\begin{cases}
  i\sqrt{n^2(n^4+\beta n^2+1)}, \quad &n=1,2,...,\\
  -i\sqrt{n^2(n^4+\beta n^2+1)}, \quad &n=-1,-2,...,
    \end{cases}
  \end{equation}
  \begin{equation}\label{eve1}
  \phi_{1,n}=\begin{cases}
  \hat{\phi}_{1,n}, \quad &n=1,2,...,\\
  \hat{\phi}_{2,n}, \quad &n=-1,-2,...,
    \end{cases}
  \end{equation}
and
  \begin{equation}\label{eve2}
  \phi_{2,n}=\begin{cases}
  \hat{\phi}_{1,-n}, \quad &n=1,2,...,\\
  \hat{\phi}_{2,-n}, \quad &n=-1,-2,....
    \end{cases}
  \end{equation}
  Then
  \begin{description}
    \item[(a)] The spectrum of the operator $A$ consists of eigenvalues $\{\lambda_n\}^\infty_{n=-\infty}$ in which $\lambda_0=0$ has the corresponding single eigenvector $\phi_0= (1,0)^T$ and each $\lambda_n$, $n=\pm1, \pm2,...$, has the corresponding double eigenvectors $\phi_{j,n}$, $j=1,2$.
    \item[(b)] $\{\phi_0, \phi_{j,n}, j=1,2, n=\pm1,\pm2,...\}$ forms an orthonormal basis for the space $X^s$ and any $\vec{w}\in X^s$  has the following Fourier series expansion
        \begin{equation}\label{exp}
        \vec{w}=\alpha_0 \phi_0+\sum^\infty_{n=-\infty}(\alpha_{1,n}\phi_{1,n}+\alpha_{2,n}\phi_{2,n})
        \end{equation}
        with
        \[\alpha_0=\langle\vec{w},\phi_0\rangle_{Q}, \quad  \alpha_{1,n}=\langle\vec{w},  \phi_{1,n}\rangle_{Q}, \quad \alpha_{2,n}=\langle\vec{w}, \phi_{2,n}\rangle_{Q},\]
        for $n=\pm1, \pm2,...$.
  \end{description}
  \end{thm}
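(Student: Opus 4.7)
My plan is to assemble parts (a) and (b) directly from the eigen-computations already carried out in the preceding paragraphs, with the relabeling in \eqref{eva}--\eqref{eve2} serving only as bookkeeping.

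For (a), the earlier computation shows that $A$ leaves each two-dimensional block $\mathrm{span}(E_{1,k},E_{2,k})$ invariant and reduces on it to the matrix $M_k$. For $k\neq 0$, $M_k$ has two simple eigenvalues $\pm i\sqrt{k^2(k^4+\beta k^2+1)}$, while $M_0$ is a Jordan block with the single eigenvalue $0$ and a single eigenvector. Since $k^2(k^4+\beta k^2+1)$ is even in $k$, the value $i\sqrt{n^2(n^4+\beta n^2+1)}$ is attained at both $k=n$ and $k=-n$, which is exactly why the renumbering in \eqref{eve1}--\eqref{eve2} produces two independent eigenvectors for each $\lambda_n$ with $n\neq 0$. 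To confirm that nothing else lies in the spectrum, I would expand any candidate eigenfunction $\vec w\in X^s$ in the orthogonal system $\{E_{1,k},E_{2,k}\}_{k\in\mathbb{Z}}$; the equation $A\vec w=\lambda\vec w$ then decouples into the block equations $M_k\vec c_k=\lambda\vec c_k$, forcing $\lambda$ to coincide with some $\lambda_n$ and $\vec w$ to lie in the span of the corresponding $\vec\eta_{j,k}$. Discreteness of the spectrum follows from $|\lambda_n|\to\infty$.

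For (b), I would invoke the Riesz-basis perturbation result cited via \cite{119}. The change-of-basis matrices $L_k=(\vec e_{1,k},\vec e_{2,k})$ converge as $|k|\to\infty$ to an invertible $2\times 2$ matrix, so both $\|L_k\|$ and $\|L_k^{-1}\|$ are uniformly bounded for $|k|$ large. Combined with the orthogonality of $\{E_{1,k},E_{2,k}\}$ in $X^s$, this block-diagonal uniform invertibility is the hypothesis of a Bari-type criterion, which yields that the full normalized family $\{\hat\phi_0,\hat\phi_{1,k},\hat\phi_{2,k}\}_{k\in\mathbb{Z}\setminus\{0\}}$ is a Riesz basis of $X^s$. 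Switching from the standard to the equivalent $H^s$-inner product introduced in the Remark then promotes this to an orthonormal basis: the weight there is calibrated so that the intra-block cross term $\langle\vec\eta_{1,k},\vec\eta_{2,k}\rangle$ vanishes thanks to the identity $\lambda_{1,k}\overline{\lambda_{2,k}}=-k^2(k^4+\beta k^2+1)$. The expansion \eqref{exp} and the $Q$-pairing formulas for the coefficients $\alpha_0,\alpha_{1,n},\alpha_{2,n}$ then follow by testing $\vec w$ against each basis element, using that every $\phi_{j,n}$ is a single pure Fourier mode in each of its two components.

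The main obstacle I expect is precisely the intra-block orthogonality. Different Fourier modes are automatically orthogonal in any mixed-Sobolev inner product because their spatial parts are distinct exponentials, but for fixed $k$ the two eigenvectors $\vec\eta_{1,k},\vec\eta_{2,k}$ of $M_k$ are \emph{not} orthogonal under the unweighted $H^{s+3}\times H^s$ inner product; the weighting in the Remark is engineered exactly to cancel the cross term produced by the coupling in $M_k$. The only nonroutine step beyond the block-wise bookkeeping is to verify this algebraic cancellation and to check that the weighted norm is indeed equivalent to the original one on $X^s$, so that convergence of \eqref{exp} in one norm implies convergence in the other.
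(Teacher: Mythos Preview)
Your proposal is correct and follows essentially the same route as the paper: the paper presents Theorem~\ref{sa} not with a separate proof but as a direct summary of the preceding spectral analysis (block reduction to $M_k$, eigenvalue computation, the limit $\det L_k\to -2i\neq 0$ together with the citation of \cite{119} for the Riesz-basis property, and the equivalent norm in the Remark for orthonormality). Your write-up simply makes explicit the standard details the paper leaves implicit, such as the Fourier-mode argument that the spectrum contains no other points.
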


  \subsection{Linear problem}
  We  start to address the exact controllability of the linear problem
  \begin{equation}\label{G}
    \begin{cases}
    u_{tt}-u_{xx}+\beta u_{xxxx}-u_{xxxxxx}=Gh, \quad x\in S,\\
    u(x,0)=\varphi_0(x),    u_t(x,0)=\psi_0(x),
    \end{cases}
  \end{equation}
  with $[\psi_0]=0$. Re-write the system \eqref{G} as a first order evolution problem, that reads,
 \begin{equation}\label{G2}
    \begin{cases}
    \frac{d}{dt}\vec{u}=A\vec{u}+B\vec{h},\\
    \vec{u}(0)=\vec{u}_0,
    \end{cases}
 \end{equation}
 where
 \[\vec{u}=\begin{bmatrix}
   u \\
    u_t\end{bmatrix}, \quad \vec{h}=\begin{bmatrix}
   0 \\
    h\end{bmatrix}, \quad
    B\vec{h}=\begin{bmatrix}
   0 \\
    Gh\end{bmatrix},\quad
    \vec{u}_0=\begin{bmatrix}
   \varphi_0 \\
    \psi_0\end{bmatrix}.
\]
 Now, given $T>0$ and $s\geq 0$, let $\vec{u}_0\in X^s$ and $B\vec{h}\in L^1(0,T;X^s)$, according to Theorem \ref{f02}, the solution  of the system  \eqref{G2}, $\vec{u}(\cdot, t)\in X^s$,  can be written as
\begin{equation}\label{G3}
    \vec{u}(t)=W(t)\vec{u}_0+\int^t_0 W(t-\tau)B\vec{h}(\tau)d\tau.
\end{equation}
Therefore, according to  Theorem \ref{sa}, one can write
\begin{align}\label{G4}
    \vec{u}(t)=\alpha_0 e^{\lambda_0 t} \phi_0 +\sum_{n\neq 0}&(\alpha_{1,n}e^{\lambda_n t}\phi_{1,n}+\alpha_{2,n}e^{\lambda_n t}\phi_{2,n})+\int^t_0  e^{\lambda_0 (t-\tau)} \beta_0(\tau) \phi_0d\tau \nonumber\\
        +&\int^t_0 \sum_{n\neq 0}\left(\beta_{1,n}(\tau)e^{\lambda_n (t-\tau)}\phi_{1,n}+\beta_{2,n}(\tau)e^{\lambda_n (t-\tau)}\phi_{2,n} \right)d\tau
\end{align}
where
\begin{equation}\label{ab0}
\alpha_0=\langle\vec{u}_0, \phi_0\rangle_{Q}=\langle\begin{bmatrix}
   \varphi_0 \\
    \psi_0\end{bmatrix},\begin{bmatrix}
   1\\
    0\end{bmatrix}\rangle_{Q}=[\varphi_0],
\end{equation}
\begin{equation}\label{ab2}
    \beta_0=\langle B\vec{h}, \phi_0\rangle_{Q}=\langle\begin{bmatrix}
   0 \\
    Gh\end{bmatrix},\begin{bmatrix}
   1\\
    0\end{bmatrix}\rangle_{Q}=0,
\end{equation}
and
\begin{equation}\label{ab1}
\alpha_{j,n}=\langle\vec{u}_0, \phi_{j,n}\rangle_{Q}, \quad \beta_{j,n}=\langle B\vec{h}, \phi_{j,n}\rangle_{Q},
\end{equation}
for $j=1,2$ and $n=\pm1, \pm2, ...$ where $\beta_{j,n}$ is unknown and determined by the choice of the control $h$ in $B\vec{h}$. Notice that
\[\langle B\vec{h}, \begin{bmatrix}
   u \\
    v\end{bmatrix}\rangle_{Q}=\langle\begin{bmatrix}
   0 \\
    Gh\end{bmatrix}, \begin{bmatrix}
   u \\
    v\end{bmatrix}\rangle_{Q}=\langle Gh,v\rangle_{L^2(S)}=\langle h,Gv\rangle_{L^2(S)},\]
    since it can be checked that the operator $G$ defined in \eqref{f4} is self-adjoint in $L^2(S)$. We denote $(\phi_{j,n})_2$ to be the second entry of $\phi_{j,n}$, then
  \begin{equation}\label{phi1}
     \beta_{j,n}=\langle B\vec{h},\phi_{j,n}\rangle_{Q} = \langle h,  G((\phi_{j,n})_2) \rangle_{L^2(S)}.
  \end{equation}


Now, we  define
\begin{align*}
{\mathcal{X}}_s=\bigg\{(\vec{u}_0,\vec{u}_T)\in X^s\times X^s, \quad \int_s (\vec{u}_0)_1&dx=\int_s (\vec{u}_T)_1dx,\\
 &\int_s (\vec{u}_0)_2dx=\int_s (\vec{u}_T)_2dx=0\bigg\},
\end{align*}
where $(\vec{u})_j$ denotes the $j-$th entry of $\vec{u}$  with $j=1,2$. The exact controllability of system \eqref{G3} states as follows.
\begin{thm}\label{c0}
Let $T>0$ be given. For any $s\geq0$, there exists a bounded linear operator
\[K_T: \quad {\mathcal X}_s\rightarrow L^2 (0,T; X^s)\]
such that for any $(\vec{u}_0,\vec{u}_T)\in{\mathcal X}_s$, the solution of
\begin{equation}\label{con}
\begin{cases}
    \frac{d}{dt}\vec{u}(t)=A\vec{u}(t)+B K_T(\vec{u}_0,\vec{u}_T),\\
    \vec{u}(0)=\vec{u}_0,
    \end{cases}
\end{equation}
satisfies
\[\vec{u}(T)=\vec{u}_T.\]
In addition, one has
\begin{equation}\label{con1}
\|K_T(\vec{u}_0,\vec{u}_T)\|^2_{L^2(0,T,X^s)}\leq C_T(\|\vec{u}_0\|^2_{X^s}+\|\vec{u}_T\|^2_{X^s}),
\end{equation}
with $C_T$ independent of $\vec{u}_0$ and $\vec{u}_T$.
\end{thm}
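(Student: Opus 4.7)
The plan is to use the moment method adapted to the spectral decomposition from Theorem \ref{sa}: reduce the controllability requirement $\vec u(T) = \vec u_T$ to a countable family of scalar moment equations on the Fourier coefficients of $h$, and then solve that moment problem via a biorthogonal family for the exponentials $\{e^{\lambda_n t}\}$ combined with uniform invertibility of a $2\times 2$ spatial Gram matrix. This parallels the Hilbert Uniqueness Method / moment approach used for KdV in \cite{72,73} and Boussinesq in \cite{122}.

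First I would use \eqref{G4} and pair $\vec u(T) - \vec u_T$ with each basis element $\phi_0,\phi_{1,n},\phi_{2,n}$. The zero-mode equation is automatic from the compatibility conditions built into $\mathcal X_s$, since \eqref{ab2} gives $\beta_0\equiv 0$. For each $n\neq 0$ and $j=1,2$, after applying \eqref{phi1} and exploiting that $\lambda_n$ is purely imaginary, the controllability condition reduces to
\[
\int_0^T e^{-\lambda_n\tau}\bigl\langle h(\cdot,\tau),\, G((\phi_{j,n})_2)\bigr\rangle_{L^2(S)}\,d\tau = c_{j,n},
\]
where $c_{j,n}:=e^{-\lambda_n T}\langle\vec u_T,\phi_{j,n}\rangle_{Q}-\langle\vec u_0,\phi_{j,n}\rangle_{Q}$. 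I would then seek $h$ via the ansatz $h(x,\tau)=\sum_{j,n} d_{j,n}(\tau)\,G((\phi_{j,n})_2)(x)$, converting the problem into separate time and space sub-problems for the scalars $d_{j,n}$.

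For the time factor, the eigenvalue asymptotics $\lambda_n=\pm i\sqrt{n^2(n^4+\beta n^2+1)}\sim\pm in^3$ produce a spectral gap $|\lambda_n-\lambda_m|\gtrsim |n-m|(n^2+m^2)$, and a generalized Ingham inequality (or explicit construction of biorthogonal exponentials, as in \cite{72,73}) yields a biorthogonal family $\{\psi_n\}\subset L^2(0,T)$ with polynomially controlled norms, valid for every $T>0$. For the spatial factor, I would prove that the matrix
\[
\Gamma_n:=\bigl(\langle G((\phi_{j,n})_2),\, G((\phi_{l,n})_2)\rangle_{L^2(S)}\bigr)_{j,l=1,2}
\]
is uniformly invertible in $n$: since $G$ is self-adjoint on $L^2(S)$ and each $(\phi_{j,n})_2$ is an explicit multiple of $e^{\pm inx}$ by \eqref{sp3}--\eqref{sp5}, a direct computation from the defining formula \eqref{f4} produces a quantitative lower bound on $\det\Gamma_n$ as long as $g$ is not constant.

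The main obstacle is precisely this double-eigenvalue feature: each nonzero $\lambda_n$ carries a two-dimensional eigenspace, so the standard scalar Ingham argument must be lifted to a block version, and one must simultaneously balance the polynomial growth of the biorthogonal norms $\|\psi_n\|$, of the normalization constants $m_{j,k}$ in \eqref{sp5}, and of the $H^{s+3}\times H^s$ weights defining $X^s$. Once this block estimate is in place, linearity of $K_T$ is immediate from the construction, and Parseval combined with the Riesz and Gram bounds delivers the norm estimate \eqref{con1}.
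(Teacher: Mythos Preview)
Your proposal is correct and follows essentially the same route as the paper: reduce to moment equations via the spectral expansion \eqref{G4}, solve the time part with a biorthogonal (dual Riesz) family to $\{e^{\lambda_n t}\}$ obtained from Ingham's theorem, and solve the spatial part by inverting the $2\times 2$ Gram matrix of $G((\phi_{j,n})_2)$, exactly as the paper does in \eqref{h}--\eqref{c}. One small simplification: because the spectral gap is cubic, the dual Riesz basis $\{q_n\}$ has \emph{uniformly} bounded $L^2(0,T)$ norms, so the ``balancing of polynomial growth'' you anticipate is not actually needed, and the condition ``$g$ not constant'' is unnecessary for the invertibility of $\Gamma_n$.
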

\begin{proof}
We first consider the case provided $[\varphi_0]=0$, the general case for $[\varphi_0]=\kappa\neq 0$ will be proved later. Without loss of generality, we can assume that $\vec{u}_T=\vec{0}$, since the system \eqref{con} is time reversible. The exact control problem then becomes to find a $h\in L^2(0,T;H^s(S))$ in equation \eqref{G4} such that $\vec{u}(T)=\vec{0}$, that is,
\begin{align}\label{G5}
    \sum_{n\neq 0}(\alpha_{1,n}&e^{\lambda_n T}\phi_{1,n}+\alpha_{2,n}e^{\lambda_n T}\phi_{2,n})\nonumber\\
        &+\int^T_0 \sum_{n\neq 0}\left(\beta_{1,n}(\tau)e^{\lambda_n (T-\tau)}\phi_{1,n}+\beta_{2,n}(\tau)e^{\lambda_n (T-\tau)}\phi_{2,n} \right)d\tau=\vec{0},
\end{align}
since, by assumption, $\alpha_0=[\varphi_0]=0$ (c.f. \eqref{ab0}). This leads to the solve $\beta_{j,n}$ from the following system,
\[\begin{cases}
\alpha_{1,n} +\int^T_0 \beta_{1,n} e^{-\lambda_n\tau}d\tau=0,\\
\alpha_{2,n} +\int^T_0 \beta_{2,n} e^{-\lambda_n\tau}d\tau=0,
\end{cases}\]
for $j=1,2$ and $n=\pm1,\pm2,...$. Then, combining \eqref{phi1}, we have the necessary conditions for the control $h$ to satisfy \eqref{G5},
\begin{equation}\label{phi2}
    \begin{cases}
\alpha_{1,n} +\int^T_0 e^{-\lambda_n t}\langle h,  G((\phi_{1,n})_2) \rangle_{L^2(S)} dt=0,\\
\alpha_{2,n} +\int^T_0 e^{-\lambda_n t}\langle h,  G((\phi_{2,n})_2) \rangle_{L^2(S)} dt=0.
\end{cases}
\end{equation}

Next, by denoting $p_k=e^{\lambda_k t}$, ${\mathcal P}=:\{p_k, -\infty<k<\infty\}$ forms a Riesz basis for its closed span, $P_T$, in $L^2(0, T)$ (c.f. \cite{124}). Let ${\mathcal L}= \{q_k, -\infty<k<\infty\}$ be the unique dual Riesz basis for $\mathcal P$ in $P_T$, that is, the functions in $\mathcal L$ are the unique elements of $P_T$ such that
\begin{equation}\label{dual}
    \int^T_0 q_k(t)\overline{p_l(t)}dt=\begin{cases}
    0, \quad k\neq l,\\
    1, \quad k=l.
    \end{cases}
\end{equation}
Now, we write the control $h$  presented in \eqref{phi2} in the form,
\begin{equation}\label{h}
    h(x,t)=c_0 q_0 +\sum_{n\neq 0} q_n\Big[c_{1,n}G((\phi_{1,n})_2)+c_{2,n}G((\phi_{2,n})_2)\Big],
\end{equation}
where $c_0$, $c_{1,n}$ and $c_{2,n}$ are to be determined so that the series \eqref{h} satisfies \eqref{phi2} and converges appropriately.

Substituting \eqref{h} into \eqref{phi2}, it yields that $c_{1,n}$, $c_{2,n}$ satisfy
    \begin{align}
    -\alpha_{1,n}=c_{1,n}\langle G((\phi_{1,n})_2), G((\phi_{1,n})_2&) \rangle_{L^2(S)}\nonumber\\
    &+c_{2,n}\langle G((\phi_{2,n})_2), G((\phi_{1,n})_2) \rangle_{L^2(S)},\label{h11}\\
    -\alpha_{2,n}=c_{1,n}\langle G((\phi_{1,n})_2),G((\phi_{2,n})_2&) \rangle_{L^2(S)}\nonumber\\
   & +c_{2,n}\langle G((\phi_{2,n})_2), G((\phi_{2,n})_2) \rangle_{L^2(S)},\label{h12}
    \end{align}
     for $n=\pm1,\pm2,...$. In addition, for simplicity we set $c_0=0$.

 Denote
\begin{align}\label{delta}
\Delta_n=&\begin{vmatrix}
\langle G((\phi_{1,n})_2), G((\phi_{1,n})_2) \rangle_{L^2(S)} & \langle G((\phi_{2,n})_2), G((\phi_{1,n})_2) \rangle_{L^2(S)} \\
\langle G((\phi_{1,n})_2), G((\phi_{2,n})_2) \rangle_{L^2(S)} & \langle G((\phi_{2,n})_2), G((\phi_{2,n})_2) \rangle_{L^2(S)}
\end{vmatrix}\nonumber\\
=& \big\|G((\phi_{1,n})_2)\big\|^2_{0}\big\|G((\phi_{2,n})_2)\big\|^2_{0}-\big|\langle G((\phi_{1,n})_2), G((\phi_{2,n})_2) \rangle_{L^2(S)}\big|^2.\nonumber
\end{align}
Notice that $\Delta_n\neq 0$ for any $n$ since $G((\phi_{1,n})_2)$ and $G((\phi_{2,n})_2)$ are linearly independent. Moreover, as $n\rightarrow \infty$, through direct computation and definition of the operator $G$, we have $|\langle G((\phi_{1,n})_2), G((\phi_{2,n})_2) \rangle_{L^2(S)}|\rightarrow 0$ and $\|G((\phi_{j,n})_2)\|^2_{L^2(S)}\sim d_n^2$, $j=1,2$ as $n\rightarrow \infty$, since
\begin{equation}\label{phi}
    (\phi_{j,n})_2=\begin{cases}
d_n e^{i n x}, \quad \mbox{for } j=1,\\
d_n e^{-i n x}, \quad \mbox{for } j=2,
\end{cases}
\end{equation}
where $0<m<|d_n|<M$ for $m$, $M>0$ independent of $n$ (c.f. \eqref{sp2}, \eqref{sp3}, \eqref{v}, \eqref{sp5} and Theorem \ref{sa}). Hence, there exists a $\varepsilon>0$ such that, for $n=\pm 1, \pm2,...$,
\begin{equation}\label{d1}
|\Delta_n|>\varepsilon.
\end{equation}
Therefore, we can apply the Cramer's rule to \eqref{h11}-\eqref{h12} and it follows
\begin{equation}\label{c}
    c_{1,n}=\frac{\Delta_{n,1}}{\Delta_n}, \quad     c_{2,n}=\frac{\Delta_{n,2}}{\Delta_n},
\end{equation}
with
\[\Delta_{n,1}=\begin{vmatrix}
-\alpha_{1,n} & \langle G((\phi_{2,n})_2), G((\phi_{1,n})_2) \rangle_{L^2(S)} \\
-\alpha_{2,n}& \langle G((\phi_{2,n})_2), G((\phi_{2,n})_2) \rangle_{L^2(S)}
\end{vmatrix}\]
and
\[\Delta_{n,2}=\begin{vmatrix}
\langle G((\phi_{1,n})_2), G((\phi_{1,n})_2) \rangle_{L^2(S)} &-\alpha_{1,n} \\
\langle G((\phi_{1,n})_2), G((\phi_{2,n})_2) \rangle_{L^2(S)} &-\alpha_{2,n}.
\end{vmatrix}.\]
We now have the explicit formula of control $h$ that satisfies \eqref{G5}.

It then remains to show that the control function $h$ defined by \eqref{h} and \eqref{c} belongs to $L^2(0,T;H^s(S))$. To prove that, we first write the standard Fourier expansions
\begin{equation}\label{g}
    G((\phi_{1,n})_2)(x)=\sum_{k=-\infty}^\infty a_{nk} e^{ikx},\quad G((\phi_{2,n})_2)(x)=\sum_{k=-\infty}^\infty b_{nk} e^{ikx},
\end{equation}
where
\[a_{nk}=\int_S G((\phi_{1,n})_2) e^{-ikx}dx, \quad b_{nk}=\int_S G((\phi_{2,n})_2) e^{-ikx}dx,\]
for $n=\pm1,\pm2,...$. Substituting these into \eqref{h} leads to (note that $c_0=0$)
\begin{equation}\label{h2}
    h(x,t)=\sum_{n\neq 0} q_n\left(c_{1,n} \sum_{k=-\infty}^\infty a_{nk} e^{ikx}+c_{2,n} \sum_{k=-\infty}^\infty b_{nk} e^{ikx}\right),
\end{equation}
thus,
\begin{align}\label{h3}
    \|h\|^2_{L^2(0,T;H^s(S))}=\int^T_0 \sum^\infty_{k=-\infty} (1+|k|)^{2s}\Big|\sum_{n\neq 0} q_n(t)&(c_{1,n}a_{nk}\\
    &+c_{2,n}b_{nk})\Big|^2dt.\nonumber
\end{align}
It then suffices to show that the following is finite,
\begin{align}\label{h4}
    &\int^T_0 \sum^\infty_{k=-\infty} (1+|k|)^{2s}\left|\sum_{n\neq 0} q_nc_{1,n}a_{nk}\right|^2dt\nonumber\\
    = & \sum^\infty_{k=-\infty} (1+|k|)^{2s}\int^T_0 \left|\sum_{n\neq 0} q_nc_{1,n}a_{nk}\right|^2dt\nonumber\\
    \leq& C\sum^\infty_{k=-\infty} (1+|k|)^{2s}\sum_{n\neq 0} |c_{1,n}|^2|a_{nk}|^2\nonumber\\
    \leq& C\sum_{n\neq 0} |c_{1,n}|^2\sum^\infty_{k=-\infty} (1+|k|)^{2s}|a_{nk}|^2
\end{align}
where the constant $C$ comes from the Riesz basis property of $\mathcal L$ in $P_T$. Combining \eqref{phi}, we are then able to repeat the proof in Theorem 1.1 of \cite{73} (or Theorem 2.1 of \cite{133}) and show that
\begin{equation}\label{ank}
\sum^\infty_{k=-\infty} (1+|k|)^{2s}|a_{nk}|^2 \leq C ((1+|n|)^{2s}+|g_n|^2)\|g\|_s^2,
\end{equation}
where $g$ is a smooth function given in the definition of $Gh$ and we set
\[g=\sum^\infty_{n=-\infty}g_n e^{inx}.\]
 Proceeding with the inequality \eqref{h4}, it leads to
\begin{align}\label{e1}
&C\sum_{n\neq 0} |c_{1,n}|^2\sum^\infty_{k=-\infty} (1+|k|)^{2s}|a_{nk}|^2\nonumber\\
\leq & C\|g\|_s^2 \sum_{n\neq 0}|c_{1,n}|^2  ((1+|n|)^{2s}+|g_n|^2)\nonumber\\
\leq & C \varepsilon^{-2}\|g\|_s^2 \sum_{n\neq 0} ((1+|n|)^{2s}+|g_n|^2)(m^2_{1,n}|\alpha_{1,n}|^2+m^2_{2,n}|\alpha_{2,n}|^2)\nonumber\\
\leq & CK^2 \varepsilon^{-2}\|g\|_s^2\sum_{n\neq 0}(1+|n|)^{2s}(|\alpha_{1,n}|^2+|\alpha_{2,n}|^2)
\end{align}
where
\[m_{1,n}=\|G((\phi_{2,n})_2)\|^2_{0}, \quad m_{2,n}=|\langle G((\phi_{2,n})_2), G((\phi_{1,n})_2) \rangle_{L^2(S)}|,\]
since it can be  checked that there exists a $K>0$  such that $m_{1,n}$, $m_{2,n}<K$ for any $n$ through the definition of the operator $G$ and \eqref{phi}.

Recall that  $\alpha_{j,n}=\langle\vec{u}_0,  \phi_{j,n}\rangle_{Q}$ with $\vec{u}_0=\begin{bmatrix}
   \varphi_0 \\
    \psi_0\end{bmatrix}$
 and $\phi_{j,n}=\vec{\eta}_{j,n}/\|\vec{\eta}_{j,n}\|_{X^s}$ defined in Theorem \ref{sa}. In addition, from \eqref{v} one has
\[\vec{\eta}_{j,n}=\begin{bmatrix}
   1/n^3 \\
    \lambda_{j,n}/n^3\end{bmatrix}e^{inx},\]
    for $j=1,2$ and $n=\pm1, \pm2,...$. Therefore,
    \begin{align*}
       |\alpha_{j,n}|&\leq C\left|\langle \begin{bmatrix}
   \varphi_0 \\
    \psi_0\end{bmatrix}, \begin{bmatrix}
    1/n^3 \\
    \lambda_{j,n}/n^3\end{bmatrix}e^{inx}\rangle_{Q}\right| \\
    &\leq C (|\varphi^0_n|+|\psi^0_n|),
    \end{align*}
   where $\varphi_n$ and $\psi_n$ are the coefficients of  Fourier series expansions of $\frac{1}{n^3}\varphi_0$ and $\psi_0$, that is,
   \[\varphi_0(x)=\sum^\infty_{n=-\infty} \varphi^0_n \frac{1}{n^3}e^{inx}, \quad \psi_0(x)=\sum^\infty_{n=-\infty} \psi^0_n e^{inx},\]
   in which we shall notice that
   \[\varphi^0_n=\int_S \varphi_0(x) \frac{1}{n^3}e^{-inx}dx=-i\int_S \varphi_0^{(3)}(x) e^{-inx}dx.\]
   Continue with \eqref{e1}, one has
   \begin{align*}
    &\int^T_0 \sum^\infty_{k=-\infty} (1+|k|)^{2s}\left|\sum_{n\neq 0} q_nc_{1,n}a_{nk}\right|^2dt\\
    \leq & CK^2 \varepsilon^{-2}\|g\|_s^2\sum_{n\neq 0}(1+|n|)^{2s}(|\varphi^0_n|^2+|\psi^0_n|^2)\\
    \leq & CK^2 \varepsilon^{-2}\|g\|_s^2 (\|\varphi_0\|^2_{s+3}+\|\psi_0\|^2_s).
   \end{align*}
   Hence, the control $h$ defined in \eqref{h} belongs to $L^2(0,T; H^s(S))$.

   Now, we consider the general case, $[\varphi_0]=\kappa\neq 0$. We still treat the control problem as finding a control $h$ such that the terminal state $\vec{u}(T)=\vec{0}$. Then, by setting $\vec{u}=\vec{v}+\vec{w}$ with $\vec{v}(T)=\vec{w}(T)=\vec{0}$,  the original control problem can be separated as two to find $K_{1,T}$ and $K_{2,T}$ such that:
   \begin{equation}\label{con01}
\begin{cases}
    \frac{d}{dt}\vec{v}(t)=A\vec{v}(t)+B K_{1,T}(\vec{u}_{1,0},\vec{0}),\\
    \vec{v}(0)=\vec{u}_{1,0},
    \end{cases}
\end{equation}
and
\begin{equation}\label{con02}
\begin{cases}
    \frac{d}{dt}\vec{w}(t)=A\vec{w}(t)+B K_{2,T}(\vec{u}_{2,0},\vec{0}),\\
    \vec{w}(0)=\vec{u}_{2,0},
    \end{cases}
\end{equation}
where $\vec{u}_{1,0}=\vec{u}_0-(\kappa,0)^T$ and $\vec{u}_{2,0}=(\kappa,0)^T$, satisfy
\[\vec{v}(T)=\vec{0}, \quad \vec{w}(T)=\vec{0}.\]
For problem \eqref{con01}, since $[\vec{u}_{1,0}]=0$, $K_{1,T}$ can be obtained according to the previous proof. For problem \eqref{con02}, we can find solution $\vec{w}=(\kappa,0)^T$ by simply setting $K_{2,T}=\vec{0}$.
   The proof is now complete.
\end{proof}

\subsection{Nonlinear problem}
Finally, we move to consider the nonlinear problem,
\begin{equation}\label{nonl}
\begin{cases}
    &u_{tt}-u_{xx}+\beta u_{xxxx}-u_{xxxxxx}+ (u^2)_{xx}= Gh,\quad x\in S,\\
    &u(x,0)=\varphi_0(x), u_t(x,0)=\psi_0(x),
    \end{cases}
\end{equation}
and show that the system is exact controllable.

 \begin{proof}(\textbf{Theorem \ref{c2}})
 To begin with the proof, we write IVP as
 \begin{equation}\label{s0}
 \begin{cases}
    &\frac{d}{dt}\vec{u}=A\vec{u}+F(\vec{u})+B\vec{h},\\
     &\vec{u}(0)=\vec{u}_0,
    \end{cases}
 \end{equation}
 where $A$, $B$ and $F$ are defined in \eqref{w2}. One can write the solution of this first order system as
    \begin{equation}\label{s1}
        \vec{u}(t)=W(t)\vec{u}_0+\int^t_0 W(t-\tau)F(\vec{u})(\tau)d\tau+\int^t_0 W(t-\tau)( B\vec{h})(\tau)d\tau.
    \end{equation}
  We define
  \begin{equation}\label{s2}
    \mu(T,\vec{u})=\int^T_0 W(T-\tau)F(\vec{u})(\tau)d\tau,
  \end{equation}
For given $\vec{u}_0$, $\vec{u}_T \in {\mathcal X}_s$, we set
  \[h=K_T(\vec{u}_0, \vec{u}_T-\mu(T,\vec{u})), \]
  as it is defined in Theorem \ref{c0}. Therefore, according to Theorem \ref{c0}, one has
  \[\vec{u}_T-\mu(T,\vec{u})=W(T)\vec{u}_0+\int^T_0 W(T-\tau)(B\vec{h})(\tau) d\tau ,\]
  that is,
  \[\vec{u}_T=W(T)\vec{u}_0+\int^T_0 W(T-\tau)( B\vec{h})(\tau)d\tau+\int^T_0 W(T-\tau)F(\vec{u})(\tau)d\tau=\vec{u}(T).\]
  In addition, according to \eqref{s1}, we can always have $\vec{u}(0)=\vec{u}_0$. It then suffices to show that, for given $T>0$, the map
  \begin{align*}
  \Gamma(\vec{u})=W(t)\vec{u}_0+\int^t_0 &W(t-\tau)F(\vec{u})(\tau)d\tau\\
  &+\int^t_0 W(t-\tau)\Big( BK_T\big(\vec{u}_0, \vec{u}_T-\mu(T,\vec{u})\big)\Big)(\tau)d\tau
  \end{align*}
  is contraction in an appropriate space.

  We denote
   \[S_r:=\{\vec{v}\in X^s| \|\vec{v}\|_{X^s}\leq r\},\]
   and we aim to show the map $\Gamma$ is contraction from $S_r$ to $S_r$ for proper $r$ depending on $T>0$. According to Proposition \ref{p1}, one has
   \begin{align*}
    \|\Gamma(\vec{u})\|_{X^s}\leq& C\|\vec{u}_0\|_{X^s}+ C\int^{T}_0 (\|F(\vec{u})\|_{X^s}+\|BK_T(\vec{u}_0, \vec{u}_T-\mu(T,\vec{u}))\|_{X^s})d\tau\\
    \leq &C\|\vec{u}_0\|_{X^s}+ C\sup_{t\in[0,T]}\|\vec{u}\|^2_{X^s}+C \Big(\|\vec{u}_0\|_{X^s}+\|\vec{u}_T\|_{X^s}\\
    &+\|\mu(T,\vec{u})\|_{X^s}\Big).
   \end{align*}
   Moreover, one has
   \begin{align*}
   \|\mu(T,\vec{u})\|_{X^s}&\leq \left\|\int^T_0 W(T-\tau)\begin{bmatrix}
   0 \\
    -(u^2)_{xx}\end{bmatrix}d\tau\right\|_{X^s} \leq C \sup_{t\in[0,T]}\|\vec{u}\|^2_{X^s}.
   \end{align*}
   Therefore, we have
   \[\|\Gamma(\vec{u})\|_{X^s}\leq C(\|\vec{u}_0\|_{X^s}+\|\vec{u}_T\|_{X^s})+C\|\vec{u}\|^2_{X^s}. \]
   For $\|\vec{u}_0\|_{X^s}<\delta$ and $\|\vec{u}_T\|_{X^s}<\delta$, by choosing $\delta$ and $r$ such that,
\begin{equation}\label{cmap1}
    2C\delta+C r^2\leq r, \quad Cr\leq \frac12,
\end{equation}
one has
\[\|\Gamma(\vec{u})\|_{X^s}\leq r.\]
   In addition, we have
   \begin{align*}
   &\|\Gamma(\vec{u})-\Gamma(\vec{v})\|_{X^s}\\
   \leq &\Big\|\int^T_0 F(\vec{u})-F(\vec{v})d\tau \Big\|_{X^s}+\Big\|\int^T_0 BK_T(0, \mu(T,\vec{u})-\mu(T,\vec{v}))d\tau \Big\|_{X^s}\\
   \leq &C \Big(\sup_{t\in [0,T]}\|\vec{u}\|_{X^s}+\sup_{t\in [0,T]}\|\vec{v}\|_{X^s}\Big)\|\vec{u}-\vec{v}\|_{X^s}\\
   \leq &C r \|\vec{u}-\vec{v}\|_{X^s}.
   \end{align*}
  According to \eqref{cmap1}, one obtains,
   \[\|\Gamma(\vec{u})-\Gamma(\vec{v})\|_{X^s}\leq \frac12  \|\vec{u}-\vec{v}\|_{X^s}.\]
  Therefore we have the desired  contraction mapping conclusion and the proof is complete.
 \end{proof}
\section{stabilization problem}
In this section, we study the closed loop system
\begin{equation}\label{close1}
    \begin{cases}
    u_{tt}-u_{xx}+\beta u_{xxxx}-u_{xxxxxxx}+(u^2)_{xx}=-\K Gu_t,  \ \ \ x\in S, \\
    u(x,)=\varphi_0(x), u_t(x,0)=\psi_0(x),\\
    \end{cases}
\end{equation}
where $[\psi_0]=0$ and $\K>0$.
\subsection{Linear problem}
We start to show the exponential decay result for the linear system
\begin{equation}\label{closelin}
    \begin{cases}
    u_{tt}-u_{xx}+\beta u_{xxxx}-u_{xxxxxxx}=-\K Gu_t,  \ \ \ x\in S,\\
    u(x,0)=\varphi_0(x), u_t(x,0)=\psi_0(x),\\
    \end{cases}
\end{equation}
where $[\psi_0]=0$.
\begin{thm}\label{expd}
Given   $s\geq 0$, for any $(\varphi_0, \psi_0)\in X^s$ with $[\psi_0]=0$, the system (\ref{closelin}) admits a unique solution $u\in C(\R, H^s(S))$. Moreover, there exists $C, \gamma >0 $  such that
\begin{equation}\label{linest}
    \|(u(\cdot,t), u_t(\cdot,t))-([\varphi_0],0)\|_{X^s}\leq C e^{-\gamma t}\|(\varphi_0,\psi_0)-([\varphi_0] ,0)\|_{X^s}.
\end{equation}
\end{thm}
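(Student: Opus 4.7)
The plan is to decouple Theorem \ref{expd} into two independent pieces: an observability inequality for the undamped group $W(t)$, dual to the exact controllability established in Theorem \ref{c0}, and a dissipative energy identity for the closed-loop semigroup. Coupling the two will produce a single-step contraction that iterates to exponential decay.

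First, I would reduce to the zero-mean subspace by setting $v=u-[\varphi_0]$: the constant state is annihilated by $\partial_x^2-\beta\partial_x^4+\partial_x^6$ and $Gv_t=Gu_t$, so $v$ satisfies the same system \eqref{closelin} with initial data $(\varphi_0-[\varphi_0],\psi_0)$ of zero mean in both components (the mean of $u_t$ is preserved by the flow and equals $[\psi_0]=0$). Hence one may assume throughout $[\varphi_0]=[\psi_0]=0$ and aim at $\|\vec{v}(t)\|_{X^s}\le Ce^{-\gamma t}\|\vec{v}(0)\|_{X^s}$. Abstractly the system reads $\dot{\vec u}=A\vec u-\K\mathcal{B}\vec u$ with $\mathcal{B}\vec u=(0,Gu_t)^T$ bounded on $X^s$, so the closed-loop semigroup $W_\K(t)$ is a well-defined bounded perturbation of $W(t)$ on the zero-mean subspace.

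The next step is to extract from Theorem \ref{c0} an observability inequality: there exist $T_0>0$ and $c>0$ such that every free solution $\vec w(t)=W(t)\vec w_0$ with zero-mean $\vec w_0\in X^s$ obeys
\[
\int_0^{T_0}\|Gw_t(\cdot,t)\|_{L^2(S)}^2\,dt\ge c\|\vec w_0\|_{X^s}^2.
\]
This is dual to exact controllability and can be read directly from the moment-problem construction \eqref{h}--\eqref{c}, whose Riesz-basis property in $P_T$ yields bounds in both directions. An energy computation in the equivalent $X^s$-norm provided by the Remark after Theorem \ref{sa} then gives
\[
\frac{d}{dt}\|\vec u(t)\|_{X^s}^2=-2\K\langle Gu_t,u_t\rangle_{L^2(S)},
\]
up to controllable lower-order commutator terms for $s>0$; under the standing assumption $g\ge 0$ the right-hand side is non-positive. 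Combining observability on $[0,T_0]$ with this dissipation identity, after transferring observability from the free to the damped trajectory via a Gronwall-type perturbation argument (for $\K$ sufficiently small, or via a compactness-uniqueness scheme), yields $\|\vec u(T_0)\|_{X^s}^2\le(1-\eta)\|\vec u(0)\|_{X^s}^2$ for some $\eta>0$. Iterating over consecutive intervals of length $T_0$ produces \eqref{linest}. For $s>0$ the Riesz basis of Theorem \ref{sa} diagonalizes the whole analysis, so the $s=0$ argument transfers essentially unchanged.

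The main obstacle is the observability inequality together with its transfer from the free to the damped trajectory. Observability rests on a unique continuation property: if a zero-mean free solution satisfies $Gw_t\equiv 0$ on $S\times(0,T_0)$, then $w\equiv 0$. Using the Riesz-basis expansion \eqref{exp} and the non-resonance of the eigenvalues $\lambda_n=\pm i\sqrt{n^2(n^4+\beta n^2+1)}$—which are well separated for large $|n|$ so that Ingham-type inequalities apply, exactly as in the proof of Theorem \ref{c0}—reduces this to an algebraic non-degeneracy statement on the Fourier coefficients of $g$. Once this is in place, the remaining steps are standard.
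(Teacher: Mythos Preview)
Your architecture---dissipation identity, an observability estimate tied to Theorem~\ref{c0}, single-step contraction, iteration---matches the paper's, but the key mechanism differs. The paper does \emph{not} dualize Theorem~\ref{c0} into an observability inequality for the free group $W(t)$ and then transfer it to $W_\K(t)$. Instead it works directly on the damped trajectory: given $T>0$ it invokes Theorem~\ref{c0} to produce an auxiliary \emph{undamped} solution $v$, driven by a control $Gf$, that steers $(0,0)$ to $(u(\cdot,T),u_t(\cdot,T))$ with $\|f\|_{L^2(0,T;L^2)}\le C\,E(T)^{1/2}$; pairing $u$ and $v$ in the energy inner product and integrating by parts yields $E(T)=\int_0^T\!\int_S Gu_t\,(f-\K v_t)\,dx\,dt$, which bounds $E(T)$ directly by the dissipated quantity $\int_0^T\!\int_S g\bigl(u_t-\!\int gu_t\bigr)^2$. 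This multiplier trick sidesteps the point you flag as the ``main obstacle.'' Your free-to-damped transfer can certainly be made to work without smallness of $\K$ (decompose $u=W(t)\vec u_0+z$ and estimate $\|Gz_t\|_{L^2_{t,x}}$ by $C\K\|Gu_t\|_{L^2_{t,x}}$), but the two options you actually name---small $\K$, or compactness--uniqueness---are respectively too restrictive and unnecessarily heavy here.

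For $s>0$ the paper also proceeds differently: rather than diagonalizing, it bootstraps by observing that $v=u_t$ and $w=u_{tt}$ again solve \eqref{closelin} with $X^0$ data, applies the $s=0$ decay to them, recovers $\|u\|_{H^6}$ from the elliptic relation $u_{xxxxxx}-\beta u_{xxxx}+u_{xx}=w+\K Gv$, and then interpolates between $s=0$ and $s=6$ (and inducts on $s\in 6\N$). Your claim that the $s=0$ argument ``transfers essentially unchanged'' via the Riesz basis is too quick and, as written, contains a genuine gap: the dissipation identity you write only controls $\langle Gu_t,u_t\rangle_{L^2}$, while the observability inequality you state pairs $\|Gw_t\|_{L^2(S)}$ against $\|\vec w_0\|_{X^s}$, which cannot hold for $s>0$.
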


%
\begin{proof}  The existence of the solution follows from a standard semigroup theory (c.f.\cite{123}). To establish the estimate \eqref{linest}, with out loss of generality, we can assume $[\varphi_0]=0$   since $u(x,t)-[\varphi_0]$ is also a solution of \eqref{closelin}. We start to show that the estimate holds for $s=0$.
Set
\begin{equation}\label{E0}
    E(t)=\frac12 \int_S u^2_{t}+u_x^2+\beta u_{xx}^2+u_{xxx}^2dx,
\end{equation}
then it suffices to show
\begin{equation}\label{E}
    E(t)\leq C e^{-\gamma t}E(0).
\end{equation}
Given $T>0$, using integration by parts yields,
\begin{align}
 E(T)-E(0)=&\frac{1}{2}\int^T_0\frac{d}{dt}\int_S     u^2_{t}(\cdot,t)+u^2_{x}(\cdot,t)+\beta u^2_{xx}(\cdot,t) + u^2_{xxx}(\cdot,t)dxdt\nonumber\\
 =& \int^T_0\int_S u_t u_{tt}+u_xu_{xt}+\beta u_{xx}u_{xxt}+u_{xxx}u_{xxxt}dxdt\nonumber\\
 =& \int^T_0\int_S u_t (u_{xx}-\beta u_{xxxx}+u_{xxxxxx}-\K Gu_t)+u_xu_{xt}\nonumber\\
 &+\beta u_{xx}u_{xxt}+u_{xxx}u_{xxxt}dxdt\nonumber\\
 =& -\K \int^T_0 \int_S u_t Gu_t dxdt\nonumber \\
 =& -\K \int^T_0\int_S u_t g\left(u_t-\int_S g(s) u_t(\cdot,t) ds\right)dxdt\nonumber\\
 =& -\K \int^T_0 \int_S g\left(u_t-\int_S g(s)u_t(\cdot,t) ds\right)^2dxdt,\label{et0}
\end{align}
the last equality holds since $[g]=1$ and
\begin{align}
&\int_S g \left(u_t \int_S g(s)u_t(\cdot,t)ds-\left(\int_S g(s)u_t(\cdot,t)ds\right)^2 \right)dx\\
=& \left(1-\int_S g(s) ds\right)\left(\int_S g(s)u_t(\cdot,t)ds\right)=0\nonumber.
\end{align}
Next, since $((0,0),(u_t(x,0),u_t(x,T)))\in {\mathcal X}_0$, according to Theorem \ref{c0},  we can have control $G f\in L^2(0,T; L^2(S))$ such that the system
\begin{equation}\label{v0}
    \begin{cases}
    v_{tt}-v_{xx}+\beta v_{xxxx}-v_{xxxxxx}=Gf,\ \ \ x\in S, \\
    v(x,0)=0, v_{t}(x,0)=0,
    \end{cases}
\end{equation}
admits a solution satisfying
\[
    v(x,T)=u(x,T), v_{t}(x,T)=u_t(x,T).
 \]
 with
\begin{equation}\label{v201}
    \|Gf\|_{L^2(0,T;L^2(S))}\leq C\|(u(x,T),u_t(x,T))\|_{X^0}.
\end{equation}
Due to the boundedness of the operator $G$, for each $t$, one has
 \begin{equation}\label{v20}
    \|f(\cdot,t)\|_{L^2(0,T;L^2(S))}\leq  \|Gf(\cdot,t)\|_{L^2(0,T;L^2(S))}\leq  C\|(u(x,T),u_t(x,T))\|_{X^0}.
\end{equation}
In addition, since
\[\|Gf\|_{L^1(0,t;L^2(S))}\leq t^\frac12\|Gf\|_{L^2(0,t;L^2(S))},\]
according to Theorem \ref{f02}, for $t\in[0,T]$, we obtain,
\begin{equation}\label{v10}
    \|v(\cdot,t)\|_{L^2(0,T;L^2(S))}\leq C\|(u(x,T),u_t(x,T))\|_{X^s}.
\end{equation}
 We now consider
\begin{align*}
E(T)=&\frac12\int_S u_t^2(x,T)+u_x^2(x,T)+\beta u_{xx}^2 (x,T)+ u_{xxx}^2(x,T)dx\\
=& \frac12 \int_S u_t(x,T)v_t(x,T)+ u_{xx}(x,T)v_{xx}(x,T)+\beta u_{xxx}(x,T)v_{xxx}(x,T)\\
&+u_{xxx}(x,T)v_{xxx}(x,T)dx -\frac12 \int_S u_t(x,0)v_t(x,0)+ u_{xx}(x,0)v_{xx}(x,0)\\
&+\beta u_{xxx}(x,0)v_{xxx}(x,0)+u_{xxx}(x,0)v_{xxx}(x,0)dx\\
=&\int_0^T \frac{d}{dt}\int_S u_{t}v_{t}+u_{x}v_{x}+\beta u_{xx}v_{xx}+u_{xxx}v_{xxx}dxdt\\
=& \int^T_0 \int_S v_t(u_{xx}-\beta u_{xxxx}+u_{xxxxxx}-\K Gu_t)+u_t(v_{xx}-\beta v_{xxxx}\\
&+v_{xxxxxx}+Gf) + (u_{x}v_{x})_t+\beta (u_{xx}v_{xx})_t+(u_{xxx}v_{xxx})_tdxdt\\
=& \int^T_0\int_S -\K v_t Gu_t+u_tGfdxdt=\int^T_0 \int_S G u_t(f-\K v_t)dxdt\\
\leq &\|G u_t\|_{L^2(0,T;L^2(S))}\|f-\K v_t\|_{L^2(0,T;L^2(S))},
\end{align*}
since $G$ is self-adjoint in $L^2(S)$. According to the setup of $g$ in the operator $G$, we denote $g^* >0$ as the least upper bound of $g(x)$ in $S$. In addition, combining \eqref{v20} and \eqref{v10}, we have
\[\|f-\K v_t\|_{L^2(0,T;L^2(S))}\leq C\|(u(\cdot,T),u_t(\cdot,T))\|_{X^s}\leq C (E(T))^\frac12.\] Thus,
\begin{align}
    E(T) \leq &  \left[Cg^* \int^T_0 \int_S g(x)\left( u_t-\int_S g(s) u_t(d,t)ds\right)^2dxdt\right]^\frac12(E(T))^\frac12.
\end{align}
It then follows,
\begin{equation}\label{et}
    \int^T_0 \int_S g(x)\left( u_t-\int_S g(s) u_t(d,t)ds\right)^2dxdt\geq (C g^* )^{-1}E(T).
\end{equation}
Substitute \eqref{et} into \eqref{et0}, we obtain,
\begin{equation}\label{et1}
    E(T)-E(0)\leq -\K (C g^*)^{-1} E(T),
\end{equation}
hence,
\begin{equation}\label{et2}
    E(T)\leq \frac{C g^*}{\K+ C g^*} E(0):=r E(0),
\end{equation}
where $0<r<1$. Repeating this estimate on successive intervals $[(k-1)T,kT]$, for $k=2,3,...$, with
$$(u(x,0), u_t(x,0)),\quad (u(x,T), u_t(x,T))$$
replaced by
$$(u(x,(k-1)T), u_t(x,(k-1)T)),\quad (u(x,kT), u_t(x,kT)),$$
it follows,
\begin{equation}\label{et3}
    E(kT)\leq r^k E(0)=e^{kT (\frac1T\ln r)}E(0).
\end{equation}
Therefore, we complete the proof for $s=0$.

Next, we move on to the case $s=6$ by considering $(\varphi_0,\psi_0)\in X^6$. Set $v=u_t$ and $w=v_t$, then $v$ solves the system
\begin{equation}\label{v0}
\begin{cases}
    v_{tt}-v_{xx}+\beta v_{xxxx}-v_{xxxxxx}=-\K Gv_t,\ \ \ x\in S,\\
    v(x,0)=\psi_0(x), v_t(x,0)=\varphi_1(x),
    \end{cases}
\end{equation}
and $w$ solves the system
\begin{equation}\label{w0}
\begin{cases}
    w_{tt}-w_{xx}+\beta w_{xxxx}-w_{xxxxxx}=-\K Gw_t,\ \ \ x\in S,\\
    w(x,0)=\varphi_1(x), w_t(x,0)=\psi_1(x),
    \end{cases}
\end{equation}
where
\[\varphi_1= \varphi_0^{(2)}-\beta \varphi_0^{(4)}+\varphi_0^{(6)}-\K G \psi_0\]
and
\[\psi_1=\psi_0^{(2)}-\beta \psi_0^{(4)}+\psi_0^{(6)}-\K G \varphi_1.\]
Thus, according to \eqref{linest} for $s=0$ which we have just established (it can be checked that $[\varphi_1]=[\psi_1]=0$),  one has
\begin{equation}\label{v1}
    \|(v,v_t)\|_{X^0}\leq C e^{-\gamma t}\|(\psi_0, \varphi_1)\|_{X^0}
\end{equation}
and
\begin{equation}\label{w1}
    \|(w,w_t)\|_{X^0}\leq C e^{-\gamma t}\|(\varphi_1, \psi_1)\|_{X^0}.
\end{equation}
Since $u_{xxxxxx}-\beta u_{xxxx}+u_{xx}=w+\K G v$, we conclude that
\begin{equation}\label{utt}
    \|u\|_{H^6(S)}\leq M e^{-\gamma t}\|(\varphi_0, \psi_0)\|_{X^6}.
\end{equation}
In addition, according to \eqref{v1}, we have
\begin{equation}\label{uttt}
    \|u_t\|_{H^3(S)}=\|v\|_{H^3(S)}\leq M e^{-\gamma t}\|(\varphi_0, \psi_0)\|_{X^6}.
\end{equation}
Therefore, estimate \eqref{linest} holds for $s=6$. We can show that \eqref{linest} holds for $s\in 6\N$ through induction, then the estimate for other $s$ follows from a classical interpolation argument.
\end{proof}

We notice that if we rewrite the system \eqref{closelin} as a first order evolution system
\begin{equation}\label{cll01}
    \begin{cases}
    \frac{d}{dt}\vec{u}=(A - \K B) \vec{u}, \quad x\in S,\\
    \vec{u}(x,0)=\vec{u}_0(x),
    \end{cases}
\end{equation}
where $v=u_t$, $\vec{u}=(u,v)^T$, $\vec{u}_0:=(\varphi_0,\psi_0)^T$ and $A$, $B$ are defined in (\ref{G2}). Its solution $\vec{u}$ can be written as
\begin{equation}\label{kb}
    \vec{u}(x,t)=W_{\K}(t)\vec{u}_0
\end{equation}
where $W_\K(t)$ is the strongly continuous semigroup on $X^s$ associated to \eqref{cll01}. The following corollary follows directly from Theorem \ref{expd}.
\begin{cor}\label{kb0}
Given   $s\geq 0$, for any $\vec{u}\in X^s$ with $[\psi_0]=0$, the system (\ref{cll01}) admits a unique solution $\vec{u}\in C(\R, X^s)$. Moreover, there exists $C, \gamma >0 $  such that
\begin{equation}\label{link}
\|W_\K(t)\vec{u}_0-([\varphi_0],0)\|_{X^s}\leq C e^{-\gamma t}\|\vec{u}_0-([\varphi_0],0)\|_{X^s}.
\end{equation}
\end{cor}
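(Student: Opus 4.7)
The plan is to observe that Corollary \ref{kb0} is essentially a vector-valued restatement of Theorem \ref{expd}, together with one additional ingredient: the existence of the semigroup $W_\K(t)$ on $X^s$. So there are two things to check.

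First, I would establish that $A - \K B$ generates a strongly continuous semigroup on $X^s$. The operator $A$ is already known (from the discussion preceding Proposition \ref{p1}) to generate an isomorphic group $W(t)$ on $X^s$ for each $s \geq 0$. The perturbation $\K B$ is bounded on $X^s$: for $\vec{u}=(u,v)^T \in X^s$ we have $B\vec{u}=(0,Gv)^T$, and since the multiplier $g$ in the definition \eqref{f4} of $G$ is smooth, the map $v \mapsto Gv$ is bounded on $H^s(S)$; hence $\|B\vec{u}\|_{X^s} \leq C\|v\|_s \leq C\|\vec{u}\|_{X^s}$. By the standard bounded perturbation theorem for $C_0$-semigroups, $A - \K B$ generates a strongly continuous semigroup $W_\K(t)$ on $X^s$, and for any $\vec{u}_0 \in X^s$ the function $\vec{u}(t)=W_\K(t)\vec{u}_0$ belongs to $C(\R,X^s)$ and is the unique mild solution of \eqref{cll01}.

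Second, I would identify $W_\K(t)\vec{u}_0$ with the pair $(u(\cdot,t),u_t(\cdot,t))^T$, where $u$ is the solution of the scalar closed-loop equation \eqref{closelin} produced by Theorem \ref{expd}. A direct differentiation shows that $(u,u_t)^T$ solves the first-order system \eqref{cll01} with the same initial datum, and uniqueness in the semigroup formulation forces the two to coincide. In particular the mean-value constraint $[\psi_0]=0$ is propagated, so the hypotheses of Theorem \ref{expd} apply along the entire trajectory.

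Once these identifications are in place, the estimate \eqref{link} is literally the estimate \eqref{linest} written in vector notation, and there is nothing further to prove. The real work (the observability bound, the decay on $[0,T]$ via the auxiliary system \eqref{v0}, the iteration over $[(k-1)T,kT]$, and the bootstrap from $s=0$ to general $s$) has already been carried out in Theorem \ref{expd}; there is no genuine obstacle, and the main task is simply to record the semigroup formulation cleanly.
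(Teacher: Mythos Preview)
Your proposal is correct and matches the paper's approach: the paper simply states that the corollary ``follows directly from Theorem \ref{expd},'' and you have spelled out precisely why---namely that $W_\K(t)$ exists by bounded perturbation of the generator $A$, and that the vector estimate \eqref{link} is just \eqref{linest} rewritten. There is nothing to add; your account is in fact more detailed than the paper's one-line justification.
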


 \subsection{Stability  of the nonlinear system}

We now move on to  consider  the nonlinear closed loop system
\begin{equation}\label{cl0}
    \begin{cases}
    u_{tt}-u_{xx}+\beta u_{xxxx}-u_{xxxxxxx}+(u^2)_{xx}=-\K Gu_t,  \ \ \ x\in S,\\
    u(x,)=\varphi_0(x), u_t(x,0)=\psi_0(x),\\
    \end{cases}
\end{equation}
 or
\begin{equation}\label{cl1}
    \begin{cases}
    \frac{d}{dt}\vec{u}=(A - \K B) \vec{u} -F(\vec{u}), \quad x\in S,\\
    \vec{u}(x,0)=\vec{u}_0(x),
    \end{cases}
\end{equation}
 where  $A$, $B$ and $F$ are defined in (\ref{G2}).
 Before we prove Theorem \ref{stable1}, we state a lemma that will be cited later.

\begin{lem}\label{cll1}
For $s\geq 0$, the group of bounded operators $W_\K(t)$ introduced in (\ref{kb}) satisfies the following the relation to $W(t)$ (introduced in Proposition \ref{p1}),
\begin{equation}\label{wk1}
    W_\K(t)\vec{u}_0=W(t)\vec{u}_0-\K \int^t_0 W(t-\tau)B(W_\K(\tau)\vec{u}_0)d\tau
\end{equation}
 for any $\vec{u}_0\in X^s$. In addition, for $\vec{f}\in L^1(\R, X^s)$,
\begin{align}
    \int^t_0 W_\K (t-\tau) \vec{f} d\tau =\int^t_0& W(t)(t-\tau)\vec{f}d\tau\nonumber\\
    &-\K\int^t_0 W(t-\tau)B\left(\int^\tau_0 W_\K (\tau-\xi)\vec{f}d\xi\right)d\tau \label{wk2}.
\end{align}
\end{lem}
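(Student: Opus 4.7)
The plan is to recognize both identities as instances of Duhamel's formula (variation of constants), treating the feedback damping $-\K B$ as a bounded perturbation of the generator $A$. Since $G$ acts as a bounded operator on $H^s(S)$ (it is essentially multiplication by the smooth function $g$ composed with a rank-one projection), $B$ is a bounded operator on $X^s$; hence the standard bounded-perturbation theorem for $C_0$-groups guarantees that $A-\K B$ generates a $C_0$-group $W_\K(t)$ on $X^s$ and that on a dense invariant core (e.g. $D(A)=H^{s+6}(S)\times H^s(S)$) the orbit $t\mapsto W_\K(t)\vec{u}_0$ is strongly differentiable.

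For the first identity, I would fix $\vec{u}_0\in D(A)$ and set $\vec{u}(t):=W_\K(t)\vec{u}_0$, so that
\begin{equation*}
\tfrac{d}{dt}\vec{u}(t)=(A-\K B)\vec{u}(t)=A\vec{u}(t)+\bigl(-\K B\vec{u}(t)\bigr),\qquad \vec{u}(0)=\vec{u}_0.
\end{equation*}
Viewing $-\K B\vec{u}(t)$ as a forcing term and applying the variation of constants formula with respect to the free group $W(t)$ generated by $A$ gives
\begin{equation*}
\vec{u}(t)=W(t)\vec{u}_0-\K\int_0^t W(t-\tau)B\bigl(W_\K(\tau)\vec{u}_0\bigr)\,d\tau,
\end{equation*}
which is exactly (\ref{wk1}). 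Since both sides are bounded linear functions of $\vec{u}_0\in X^s$ (by Proposition \ref{p1} and the boundedness of $B$ and $W_\K(t)$), a density argument extends the identity from $D(A)$ to all of $X^s$.

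For the second identity, I would apply the same idea to the inhomogeneous problem. Set $\vec{w}(t):=\int_0^t W_\K(t-\tau)\vec{f}(\tau)\,d\tau$; then $\vec{w}$ solves
\begin{equation*}
\tfrac{d}{dt}\vec{w}=(A-\K B)\vec{w}+\vec{f}=A\vec{w}+\bigl(\vec{f}-\K B\vec{w}\bigr),\qquad \vec{w}(0)=\vec{0},
\end{equation*}
so Duhamel applied with $W(t)$ now yields
\begin{equation*}
\vec{w}(t)=\int_0^t W(t-\tau)\vec{f}(\tau)\,d\tau-\K\int_0^t W(t-\tau)B\vec{w}(\tau)\,d\tau.
\end{equation*}
Substituting the defining expression $\vec{w}(\tau)=\int_0^\tau W_\K(\tau-\xi)\vec{f}(\xi)\,d\xi$ inside the second integral gives precisely (\ref{wk2}). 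For $\vec{f}\in L^1(\R,X^s)$ that is not smooth in time, the identity is first established for smooth compactly-supported $\vec{f}$ and then extended by continuity using the bounds in Proposition \ref{p1}.

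The potential obstacle is not conceptual but bookkeeping: one must check that each Duhamel identity is actually valid in $X^s$ (not just in some weaker sense), which is why the argument is carried out first on the dense subset where classical differentiability holds and then extended by the boundedness of $B$ and the uniform operator bounds on $W(t)$ and $W_\K(t)$ over compact time intervals. Given that $B$ is bounded on $X^s$, no regularity is lost and the extensions are routine.
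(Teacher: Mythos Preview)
Your proposal is correct and is exactly the approach the paper has in mind: the paper itself simply states that ``the proof of the lemma simply [is] based on the `variation of parameters' formula for the definition of the operators $W(t)$ and $W_\K(t)$, therefore omitted.'' Your write-up supplies precisely the Duhamel/variation-of-constants argument (with the bounded-perturbation and density details) that the paper omits, so there is nothing to add or correct.
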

The proof of the lemma simply  based on the ``variation of parameters'' formula for the definition of the operators $W(t)$ and $W_\K(t)$, therefore omitted.

According to Lemma \ref{cll1}, we can rewrite the closed loop system \eqref{cl1} as an integral equation
\begin{align}
    \vec{u}(t)=& W_\K(t) \vec{u}_0-\int^t_0 W_\K(t-\tau)\left(F(\vec{u})\right)d\tau \nonumber\\
    =& W_\K(t)\vec{u}_0 -\int^t_0 W(t-\tau)(F(\vec{u}))(\tau)d\tau\label{cl2}\\
     &+\K\int^t_0 W(t-\tau)B\left(\int^\tau_0 W_\K (\tau-\xi)F(\vec{u})(\xi)d\xi\right)d\tau \nonumber.
\end{align}

\begin{proof}(\textbf{Theorem \ref{stable1}})
For simplicity, we consider the problem by assuming $[\varphi_0]=0$. According to Corollary \ref{kb0}, for any $\vec{u}_0\in X^s$, by choosing $T>0$ such that
\begin{equation}\label{link1}
C e^{-\gamma T}\leq \frac14,
\end{equation}
one can have
\begin{equation}\label{link}
\|W_\K(t)\vec{u}_0\|_{X^s}\leq \frac14\|\vec{u}_0\|_{X^s},
\end{equation}

We then seek a solution $u$  to the integral equation
\begin{align}
    \Gamma(\vec{u})=&W_\K(t)\vec{u}_0 -\int^t_0 W(t-\tau)(F(\vec{u}))(\tau)d\tau\label{cl3}\\
     &+\K\int^t_0 W(t-\tau)B\left(\int^\tau_0 W_\K (\tau-\xi)F(\vec{u})(\xi)d\xi\right)d\tau \nonumber
\end{align}
in some ball $S_r$  for any $t\in [0,T]$ where
  \[S_r:=\{\vec{v}\in X^s| \|\vec{v}\|_{X^s}\leq r\}.\]
This will be done provided that $\|\vec{u}_0\|_{X^s}\leq \delta $ where $\delta$ is determined later. Furthermore,  to ensure the exponential stability, $\delta$ and $r$ will be chosen such that $\|\vec{u}(T)\|_{X^s}\leq \frac12 \|\vec{u}_0\|_{X^s}$.

From the properties of $W_\K(t)$ as a strong continuous semigroup on $X^s$, $s\geq 0$, we can obtain for given $T>0$,
    \begin{equation}\label{wke1}
        \sup_{t\in[0,T]}\|W_\K(t)\vec{u}\|_{X^s}\leq C \|\vec{u}\|_{X^s},
    \end{equation}
    \begin{equation}\label{wkw2}
        \sup_{t\in[0,T]} \left\|\int^t_0 W_\K(t-\tau)\vec{f}d\tau\right\|_{X^s}\leq C\|\vec{f}\|_{L^1(0,T;X^s)}.
    \end{equation}
Similar estimates holds for $W(t)$ (c.f. Proposition \ref{p1}).
Recall that $F(\vec{u})=(0,-(u^2)_{xx})^T$, we then have, for $0<t<T$,
\begin{align}
   \left\| \int^t_0 W (\tau-\xi)F(\vec{u})(\xi)d\xi\right\|_{X^s}\leq & C\|F(\vec{u})\|_{L^1(0,T;X^s)}\label{wke3}\\
   \leq & C \sup_{t\in[0,T]} \|(u^2)_{xx}\|_{s}\nonumber\\
   \leq &C  \sup_{t\in[0,T]} \|u\|^2_{s+3}\nonumber\\
   \leq & C  \sup_{t\in[0,T]}\|\vec{u}\|^2_{X^{s}}\nonumber
\end{align}
In addition, due to the boundedness of $G$, one has
\begin{equation*}
   \left\|B\left(\int^\tau_0 W_\K (\tau-\xi)F(\vec{u})(\xi)d\xi\right)\right\|_{X^s}\leq C \left\|\int^\tau_0 W_\K (\tau-\xi)F(\vec{u})(\xi)d\xi\right\|_{X^s}.
\end{equation*}
Then, these lead to
\begin{align}\label{wke5}
    &\left\|\int^t_0 W(t-\tau)B\left(\int^\tau_0 W_\K (\tau-\xi)F(\vec{u})(\xi)d\xi\right)d\tau\right\|_{X^s}\\
    \leq &C\left\|B\left(\int^t_0 W_\K (t-\xi)F(\vec{u})(\xi)d\xi\right)\right\|_{L^1(0,T;X^s)}\nonumber\\
    \leq & C  \sup_{t\in[0,T]}\|\vec{u}\|^2_{X^{s}}.\nonumber
\end{align}
Combining \eqref{wke1}-\eqref{wke5}, we have
\begin{align}
    \|\Gamma(\vec{u})\|_{X^s}\leq& C\|\vec{u}_0\|_{X^s}+C \sup_{t\in[0,T]}\|\vec{u}\|^2_{X^{s}}\nonumber\\
    \leq &C\|\vec{u}_0\|_{X^s}+ C r^2\label{contract1}
\end{align}
and
\begin{align}
    \|\Gamma(\vec{u})-\Gamma(\vec{v})\|_{ X^s}\leq& C\Big( \sup_{t\in[0,T]}\|\vec{u}\|_{X^s}+\sup_{t\in[0,T]}\|\vec{v}\|_{X^s}\Big) \|\vec{u}-\vec{v}\|_{X^s}\nonumber\\
    \leq & Cr \|\vec{u}-\vec{v}\|_{X^s}\label{contract2}.
\end{align}
for some $C>0$ independent $t$, $\delta$ and $r$.

On the other hand, based on \eqref{link} and \eqref{cl3}, one has
\begin{align}
    \|\Gamma(\vec{u})(T)\|_{X^s}\leq &\|W_\K(T)\vec{u}_0\|_{X^s}+\left\|\int^T_0 W(t-\tau)(F(\vec{u}))(\tau)d\tau\right\|_{X^s}\nonumber\\
     &+\K\left\|\int^T_0 W(t-\tau)B\left(\int^\tau_0 W_\K (\tau-\xi)F(\vec{u})(\xi)d\xi\right)d\tau\right\|_{X^S} \nonumber\\
     \leq &\frac14 \|\vec{u}_0\|_{X^s}+Cr^2\label{link2}.
\end{align}
We set $\delta=4Cr^2$ where $r>0$ is chosen so that
\[(4C^2+C)r\leq 1 \quad \mbox{and}\quad Cr\leq \frac12.\]
Therefore, we have $\Gamma$ is a contraction mapping in $S_r$. Moreover, according to \eqref{link2}, the unique fixed point $\vec{u}\in S_r$ satisfies
\[\|\vec{u}(T)\|_{X^s}=\|\Gamma(\vec{u})(T)\|_{X^s}\leq \frac{\delta}{2}.\]
Now, for $0<\|\vec{u}_0\|_{X^s}\leq \delta$, we  set $\delta'=\|\vec{u}_0\|_{X^s}$, By changing $\delta'$ into $\delta$ and $r$ into $r'=(\delta'/\delta)^{\frac12} r$, we can obtain
\[\|\vec{u}(T)\|_{X^s}\leq \frac{\delta'}{2}=\frac12 \|\vec{u}_0\|_{X^s}.\]
Hence, it follows through induction,
\[\|\vec{u}(nT)\||_{X^s}\leq 2^{-n} \|\vec{u}_0\|_{X^s}.\]
Similar to \eqref{et3} in Theorem \ref{expd}, we then have
\[\|\vec{u}\|_{X^s}\leq M e^{-\sigma t}\|\vec{u}_0\|_{X^s}\]
for some constant $M>0$ and $\sigma>0$.  The proof is now complete.
\end{proof}

\begin{rem}
For the proof of $[\varphi_0]\neq 0$, as we shown in the introduction, the quantity $[u(\cdot,t)]$ is conserved for any $t$. We then can set $[\varphi_0]=\eta$ and adapt the idea from Zhang's work (Theorem 1.2 in \cite{73}) by  re-considering $v(x,t)= u(x,t)-\eta$ (which follows $[v(0,t)]=0$) that solves the new system
\begin{equation*}
    \begin{cases}
    v_{tt}-(1+2\eta)v_{xx}+\beta v_{xxxx}-v_{xxxxxx}+(v^2)_{xx}=-\K Gv_t,  \ \ \ x\in S,\\
    v(x,0)=\varphi_0(x)-\eta , v_t(x,0)=\psi_0(x).\\
    \end{cases}
\end{equation*}
The proof for the new system is similar to the one for $[\varphi_0]=0$ therefore omitted.
\end{rem}

\end{document}